
\documentclass[]{interact}

\usepackage{graphicx}              
\usepackage{subfigure}
\usepackage{amsmath}               
\usepackage{amsfonts}              
\usepackage{amssymb}
\usepackage{tabularx,caption}

\newcommand{\real}{\mathbb{R}}
\newcommand{\integ}{\mathbb{Z}}
\newcommand{\posint}{\mathbb{Z}_+}
\newcommand{\sF}{\mathcal{F}}
\newcommand{\bE}{{\mathbb E}}

\usepackage[numbers,sort&compress]{natbib}
\bibpunct[, ]{[}{]}{,}{n}{,}{,}

\theoremstyle{plain}
\newtheorem{theorem}{Theorem}[section]
\newtheorem{lemma}[theorem]{Lemma}
\newtheorem{cor}[theorem]{Corollary}

\newtheorem{assume}[theorem]{Property}

\theoremstyle{definition}

\newtheorem{example}[theorem]{Example}

\theoremstyle{remark}
\newtheorem{remark}{Remark}

\begin{document}


\title{On the Validity of the Girsanov Transformation 
 Method for Sensitivity Analysis of Stochastic Chemical Reaction Networks}

\author{
\name{Ting Wang\textsuperscript{a}\thanks{\textsuperscript{a}Email: tingw@udel.edu} and Muruhan Rathinam\textsuperscript{b}\thanks{\textsuperscript{b}Email: muruhan@umbc.edu}}
\affil{\textsuperscript{a}Department of Mathematical Sciences, University of Delaware, Newark, USA; \textsuperscript{b}Department of Mathematics and Statistics, University of Maryland Baltimore County, Baltimore, USA}
}

\maketitle

\begin{abstract}
We investigate the validity of the Girsanov Transformation (GT) method for parametric sensitivity
analysis of stochastic models of chemical reaction networks. The validity depends on
the likelihood ratio process being a martingale and the commutation of a certain
derivative with expectation. We derive some exponential integrability
conditions which imply both these requirements. We provide further conditions
in terms of a reaction network that imply these exponential integrability
conditions.
\end{abstract}

\begin{keywords}
Girsanov transformation, sensitivity analysis, chemical reaction networks, exponential integrability
\end{keywords}

\section{Introduction}
Parametric sensitivity analysis is an essential part of 
modeling and analysis of dynamical systems. In the context of stochastic 
dynamical systems the problem that is considered frequently is that 
of estimating the {\em sensitivity} defined by the partial derivative
\[
\left.\frac{\partial}{\partial c}\right|_{c = c^*} \mathbb{E} f(X(T,c))
\]
where $c$ is a parameter of interest, $c^*$ is its nominal value, 
$X$ is the stochastic process, $f$ is a scalar function of the state 
and $T>0$ is a fixed terminal time.   

While we focus on the well-stirred stochastic model of chemical kinetics \cite{Gillespie77}, we like to mention that 
similar models arise in applications that are concerned with populations (nonnegative
integer vectors). Due to the high dimensionality of the state space of 
the Markov process $X$ describing the chemical kinetics, Monte Carlo methods 
are usually the most viable. Monte Carlo methods of sensitivity analysis 
for stochastic chemical models can be classified into the {\em finite
  difference} methods (FD) \cite{CFD, CRP}, the {\em Girsanov
Transformation} (GT) method \cite{Gir}, the {\em regularized pathwise derivative} (RPD)
method \cite{RPD} and what might be termed the {\em auxiliary path} (AP) type methods
\cite{APA, PPA}. When considering more general applications, one again finds roughly,
a similar classification \cite{Glynn-book}. Among these methods, the FD
methods are always biased and the RPD method is biased in the context of 
chemical kinetics and is not always applicable. The well known GT method 
is usually widely applicable and is unbiased. The main shortcoming of the GT
method is that it has been observed and that it often has large variance and 
hence less efficient \cite{wang2016efficiency}. 
For an asymptotic analysis of the variance of the GT and 
related methods, see \cite{wang2016efficiency} where a centralized GT method 
is shown to be more efficient under certain circumstances. The recently introduced auxiliary path type
methods are unbiased as well, and provide for an alternative to the GT
methods. Nevertheless, due to the ease of implementation, the GT method is
of interest.  

The validity of the GT method has been studied in the literature under various
contexts \cite{glynn1995likelihood, l1995note}. We briefly describe the
differences and similarities of our work in comparison with these earlier
works.

First we note that the analysis in \cite{glynn1995likelihood, l1995note} which provides sufficient conditions
under which the GT method is applicable is formulated in terms of a
discrete time Markov chain. On the other hand, the analysis in our paper is formulated
in terms of a continuous time Markov chain and provides a self-contained description of the GT method based on \cite{Bremaud}. While in principle there is a
precise relationship between the two approaches, this relationship is rather 
cumbersome. In particular, the fact that a fixed final time
$T$ in our continuous time setting translates to a random stopping time $N$
in the discrete time approach results in added assumptions on the nature of
the dependence of $N$ on parameters: see for instance the assumption {\em A2(iii)} in
\cite{glynn1995likelihood} which assumes 
$L^1$-differentiability (with respect to the parameter) of the ratios of the
values of $N$
at a general parameter value to that at a reference parameter value.
There is no such assumption in our analysis.

Secondly, our analysis is focused on a limited, albeit practically important
class of continuous time Markov chains, namely, stochastic reaction networks. 
This imposes extra structure, resulting in a simpler set of sufficient
conditions.  

The previous works \cite{glynn1995likelihood, l1995note} as well as our work,
all result in a form of exponential integrability requirement as part of
the set of sufficient conditions ensuring the validity of the GT method.  
The works \cite{glynn1995likelihood, l1995note} require exponential
integrability of the event count, while our paper provides one of two
alternative requirements: \eqref{eqn:main-integrability-right} or \eqref{eqn:main-integrability-left}. One requires exponential integrability
of event count while the other requires exponential integrability of
the intensity (propensity) function. This second alternative condition
doesn't appear to have a counter part in \cite{glynn1995likelihood,
  l1995note}. Verification of either one of these is
adequate under the assumption that the required sensitivity exists, which was
shown to follow under milder conditions \cite{APA}. 

The verification of either one of these exponential integrability conditions is not so
straightforward. 
Bearing this in mind, in the context of stochastic chemical reaction networks,
we derive some sufficient conditions for the validity of GT method
that can be verified via an algorithm given the network parameters.
We also illustrate via examples how our sufficient conditions may be verified
for a rich class of reaction networks using a probabilistic coupling argument.

\subsection{Stochastic chemical kinetics}

We describe the stochastic model of well-stirred chemical reactions 
involving $n$ molecular species undergoing $m$ reaction channels \cite{Gillespie77}.  
In this model, the molecular copy number vector $X(t) \in \mathbb{Z}_+^n$ ($t \geq 0$) 
is considered as a Markov process in continuous time. 
Occurrence of $j$th reaction leads to a change of $X(t)$ by $\nu_j \in
\integ^n$ for $j=1,\dots,m$ where the $\nu_j$ are known as the {\em
  stoichiometric vectors}. 
We denote by $R_j(t)$ the counting process which counts the number of
occurrences of reaction channel $j$ during $(0,t]$, for $j=1,\dots,m$.
The probabilistic rate of occurrence of reaction $j$, is given by the
{\em intensity function} or {\em propensity function} $a_j(x)$ which is 
defined such that conditioned on $X(t) = x$, the
probability that $X(t+h)=x + \nu_j$ is $a_j(x) h + o(h)$ as $h \to 0+$, 
and the probability that $X(t+h)=x$ is $1 - \sum_{j=1}^m a_j(x) h + o(h)$ as
$h \to 0+$.   

The propensity function, in addition to the state $x$, potentially depends on other
factors such as the temperature and system volume and this dependence is 
captured by a set of parameters which are non-random and constant in time. 
In particular, in the stochastic form of {\em mass action} case, the propensity
function is of the {\em product form} 
\begin{equation}\label{eq-mass-action}
a_j(x,c) = c_j b_j(x),
\end{equation}
where $c_j > 0$ is a parameter independent of $x$ and $b_j(x)$ is a
(multivariate) polynomial
in $x$ \cite{Gillespie77}. 
While our final results in this paper assume the product form
\eqref{eq-mass-action} (but not necessarily the mass action form), we shall keep our derivations as general as possible 
until the final steps.

It is possible to represent the processes for different parameter
values $c$ in the same sample space $(\Omega,\sF,\mathbb{P})$ (hence the 
notation $X(t,c)$ and $R(t,c)$) 
via the {\em random time change representation} \cite{Ethier-Kurtz}
\begin{equation}\label{eqn:random-time-change}
X(t,c) = x_0 + \sum_{j=1}^m \nu_j Y_j\left( \int_0^t a_j(X(s,c),c) ds \right),
\end{equation}  
where $Y_1,\dots,Y_m$ are independent unit rate Poisson processes 
carried by $(\Omega,\sF,\mathbb{P})$ and $x_0 \in \posint^n$ is the initial state. The reaction count processes $R_j$ are 
then given by 
\begin{equation}
R_j(t,c) = Y_j\left( \int_0^t a_j(X(s,c),c) ds \right) \quad j=1,\dots,m.
\end{equation}
We also note the relationship 
\begin{equation}\label{eq-X-R}
X(t,c) = x_0 + \sum_{j=1}^m \nu_j R_j(t,c).
\end{equation}

We note that the processes $X(t,c)$ and $R(t,c)$ are {\em cadlag}. 
We shall also assume that $X(t,c)$ is non-explosive for each $c$, that is,
$R_j(t,c) < \infty$ for each $t \geq 0$, each $c$ and $j=1,\dots,m$.  
For sake of readability, throughout the paper we suppress the dependence of $X$ and $R$ on $\omega \in
\Omega$ except when necessary. 

Without loss of generality we shall focus on estimation of sensitivity 
with respect to one scalar parameter $c$ and we assume that it corresponds to 
the first reaction channel, so that $c=c_1$ and thus $a_1(x,c)=c \, b_1(x)$ under the product form. 
Let $c^* >0$ be a {\em nominal parameter value} and $T>0$ be some terminal time.  
Given a function $f:\posint^n \to \real$, 
we are interested estimating the sensitivity defined by
\[\left.\frac{\partial}{\partial c}\right|_{c = c^*} \mathbb{E} f(X(T,c)).\]
Throughout this paper, we assume that the sensitivity exists.
We refer the reader to \cite{APA} for some sufficient conditions
that guarantee the
existence of the sensitivity. 

\subsection{The Girsanov transformation method}\label{sec:GT-method}
One of the commonly used sensitivity estimation methods is the Girsanov
transformation (GT) method which is also known as the likelihood ratio (LR)
method in literature. 
We first describe the basics of this approach and then furnish details 
in the context of chemical kinetics.  
We consider a nominal parameter value $c^*$ and an open interval 
$I_{c^*} = (c^*-\epsilon,c^*+\epsilon)$. 
As mentioned before, we assume that the processes $X(t,c)$ and $R(t,c)$ for $c \in I_{c^*}$ are all carried by 
a common probability space $(\Omega, \mathcal{F}, \mathbb{P})$.  
Let us denote by $\{\sF_t\}_t$ the filtration generated by $X(t,c^*)$ 
and $R(t,c^*)$. (We remark that if we assume that the $\nu_j$ are all distinct
then $\{\sF_t\}_t$ will be generated by $X(t,c^*)$ alone).  

The GT method involves defining for each $c \in I_{c^*}$  
a probability measure $\mathbb{P}_c$ on $(\Omega,\sF)$ which
satisfies the following condition.

{\bf Condition 1:} For each $c \in I_{c^*}$, (i) $\mathbb{P}_c$  
is absolutely continuous with respect to $\mathbb{P}$,
(ii) $\mathbb{P}_{c^*}=\mathbb{P}$ and (iii) for every 
bounded function $f:\posint^n \to \real$ 
\begin{equation}\label{eqn:change-of-measure}
\mathbb{E}f(X(T, c))  = \mathbb{E}_cf(X(T, c^*)),
\end{equation}
where $\mathbb{E}$ is the expectation with respect to 
$\mathbb{P}$ and $\mathbb{E}_c$ is the expectation with respect to $\mathbb{P}_{c}$.

Suppose such a family of probability measures $\mathbb{P}_c$ satisfying 
Condition 1 exist. 
Let us denote by $L(c,t)$ the Radon-Nykodim derivative
\begin{equation}
L(t,c) = \left.\frac{d \mathbb{P}_c}{d \mathbb{P}}\right|_{\sF_t}.
\end{equation}
Due to \eqref{eqn:change-of-measure}, the sensitivity can be written as
\[
\left.\frac{\partial}{\partial c}\right|_{c = c^*} \mathbb{E} f(X(T, c))
= \left.\frac{\partial}{\partial c}\right|_{c = c^*} \mathbb{E}_{c}
f(X(T,c^*)) = \left.\frac{\partial}{\partial c}\right|_{c = c^*}
\mathbb{E} [f(X(T, c^*)) L(T,c)].
\]

{\bf Condition 2:}
Suppose that the derivative 
\[
Z(t,c^*) = \left.\frac{\partial}{\partial c}\right|_{c = c^*} L(t,c)
\]
exists almost surely with respect to $\mathbb{P}_{c^*}$ and that 
the following commutation of derivative and expectation holds:
\begin{equation}\label{eq-cond2}
\mathbb{E} \left( \left.\frac{\partial}{\partial c}\right|_{c = c^*}
f(X(T,c^*)) L(T,c)\right) = \left.\frac{\partial}{\partial c}\right|_{c = c^*}
\mathbb{E} f(X(T,c^*)) L(T,c).
\end{equation}

This leads to the formula
\begin{equation}\label{eq-GT}
\left.\frac{\partial}{\partial c}\right|_{c = c^*} \mathbb{E} f(X(T, c))
 = \mathbb{E} [f(X(T, c^*))Z(T,c^*)].
\end{equation}
Thus if Conditions 1 and 2 are satisfied, the required sensitivity equals the expected value of the random 
variable $f(X(T, c^*))Z(T,c^*)$ and hence can be estimated via iid sample
estimation. Thus $\hat{s}_N$ given by  
\[\hat{s}_N = \frac{1}{N} \sum_{i = 1}^{N} f(X^{(i)}(T, c^*))
Z^{(i)}(T,c^*),\]
where $(X^{(i)}(T, c^*),Z^{(i)}(T,c^*))$ for $i=1,\dots,N$ are iid pairs of
samples, is the GT estimator for a sample size of $N$. We note that the 
simulation is carried out with respect to the probability measure
$\mathbb{P} = \mathbb{P}_{c^*} $. 
  
The GT estimator is unbiased but often has large variance unless $N$ is very
large \cite{wang2016efficiency}.
Nevertheless, due to its simplicity, GT has been widely applied for
sensitivity analysis in numerous areas such as chemical kinetics and operations research. 

While the GT method is widely used, we are not aware of theoretical studies 
on the validity of GT method.
In particular, in the area of stochastic reaction networks, no sufficient conditions have been provided to justify the method. 
Therefore, we believe a theoretical analysis of the method could provide a guideline about the applicability of GT to certain types of problems.    
In this paper, we aim to provide sufficient conditions 
that ensure Conditions 1 and 2 stated above.

\section{The validity of change of measure}\label{sec:change-of-measure}
\subsection{Change of intensity}
We explore some sufficient conditions that guarantee Condition 1 for the
change of measure. 
Our exposition here is based on the change of intensity theory in 
Section VI 2 of \cite{Bremaud}. We start with the probability space $(\Omega,\sF,\mathbb{P})$ 
on which the processes $X(t,c)$ and $R(t,c)$ are defined for 
$c \in I_{c^*} = (c^*-\epsilon,c^*+\epsilon)$. As before, we denote by $\sF_t$ the filtration
generated by $X(t,c^*)$ and $R(t,c^*)$. 
By definition, the counting process $R_j(t, c^*)$ has the $(\mathbb{P}, \mathcal{F}_t)$ predictable intensity $a_j(X(t-, c^*), c^*)$.
Now for any $c \in I_{c^*}$,  
we want to explicitly construct a probability measure $\mathbb{P}_c$ 
on $(\Omega,\sF)$ such that $R(t, c^*)$ admits the
$(\mathbb{P}_{c},\mathcal{F}_t)$ predictable intensity $a_j(X(t-, c^*),c)$. This is accomplished by defining the {\em likelihood ratio} process
$L(t,c)$ which under the right conditions will serve as the Radon-Nykodim
derivative.    
 
We first define an auxiliary ($m$ dimensional) process $\mu(t, c)$ 
under a certain assumption on the propensity functions as follows.
Given an arbitrary $c \in I_{c^*}$, we assume that for all $x \in \mathbb{Z}_+^n$, 
\begin{equation}
a_j(x, c^*) = 0 ~\text{if and only if}~ a_j(x, c) = 0, ~~~~j = 1, 2, \cdots, m.
\end{equation}
We note that this is assumption holds in particular when the propensity functions are of the
product form $a_j(x, c) = c_j b_j(x)$.
Now, based on this assumption, the following process is well-defined ($c^*$ is fixed). 
For each $c \in I_{c^*}$, we define
\begin{equation}\label{eqn:ratio}
\mu_j(t, c) = \frac{a_j(X(t-, c^*), c)}{a_j(X(t-, c^*), c^*)}, ~~~~j = 1,\cdots, m.
\end{equation}
In the case that $a_j(X(t-, c^*), c^*) = 0$, by assumption we have $a_j(X(t-,c^*), c) = 0$ as well, so we can simply define $\mu_j(t, c)$ to be any
strictly positive constant. 
We note that $\mu_j(t, c)$ is $\mathcal{F}_t$-predictable by its left
continuity, and moreover for each $t \geq 0$,  we have $ 0<\mu_j(t, c) < \infty$ almost surely. We shall make the extra assumption that 
$\mu_j(t,c)$ is bounded almost surely for each $c \in I_{c^*}$. In the case of 
the product form of propensity functions with $c=c_1$, we note that 
the boundedness assumption holds since $\mu_1(t,c) = c/c^*$ and $\mu_j(t,c)=1$
for $j=2,\dots,m$, that is the process $\mu(t,c)$ is a deterministic and constant in $t$. 

Next, following \cite{Bremaud}, we explicitly define the likelihood ratio process $L(t, c)$ as follows
\begin{equation}\label{eq-def-L}
L(t, c) = \prod_{j =1}^m\left( \left(\prod_{n=1}^{R_j(t, c^*)}\mu_j(T_j^n, c)\right) \exp{\left(\int_0^t (1 - \mu_j(s, c)) a_j(X(s, c^*), c^*) \,ds\right)}\right),
\end{equation}
where $T_j^n$ is the $n$-th jump time of $R_j(t,c^*)$.
By convention, we take the product $\prod_{n=1}^{R_j(t, c^*)}$ to be $1$ if
$R_j(t, c^*) = 0$. We remark that due to our non-explosivity assumption $R_j(t,c^*)$ is finite 
almost surely for each $t$ and thus $L(t,c)$ is well defined and satisfies 
$0<L(t,c)<\infty$ for each $t \geq 0$.. 
 
It can be shown that $L$ defined above is the solution of the equation \cite{Bremaud}
\begin{equation}\label{eq-L-SDE}
L(t, c) = 1+\sum_{j=1}^{m}\int_{(0, t]} L(s-, c)(\mu_j(s, c) - 1) d M_j(s,
  c^*),
\end{equation}
where 
\begin{equation}
M_j(t, c^*) = R_j(t, c^*) - \int_0^t a_j(X(s, c^*), c^*) ds.
\end{equation} 
From the non-explosivity assumption, we see that for all $t\geq 0$ and $j = 1,
\cdots, m$, 
\begin{equation}\label{eqn:integrability}
\int_0^t a_j(X(s, c^*), c^*) \,ds < \infty,~~\mathbb{P}~~~\text{a.s.},
\end{equation}
and hence $M(t, c^*)$ is an $m$-dimensional local martingale \cite{Bremaud}. 
We summarize some key results from \cite{Bremaud} as a lemmas. 
\begin{lemma}{\bf (Bremaud \cite{Bremaud}, Section VI, Theorem T2)}\label{lem:stochastic-expo}
Under the non-explosivity assumption (with respect to $\mathbb{P}$), for each $c \in I_{c^*}$, $L(t, c)$ is a $(\mathbb{P}, \mathcal{F}_t)$
nonnegative local martingale and hence a $(\mathbb{P}, \mathcal{F}_t)$ supermartingale.  
\end{lemma}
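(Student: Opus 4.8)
The plan is to split the claim into three parts and prove them in order: strict positivity (which yields nonnegativity), the local martingale property, and finally the supermartingale property.

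First I would establish that $0 < L(t,c) < \infty$ almost surely directly from the explicit representation \eqref{eq-def-L}. Every factor in that product is strictly positive: the jump contributions $\mu_j(T_j^n,c)$ are positive because $\mu_j(t,c)>0$ by construction, and the exponential factors are manifestly positive. Since the non-explosivity assumption guarantees $R_j(t,c^*)<\infty$ almost surely, each finite product $\prod_{n=1}^{R_j(t,c^*)}\mu_j(T_j^n,c)$ is a finite positive number, so $L(t,c)$ is strictly positive and finite for each $t$ almost surely. In particular $L(\cdot,c)\ge 0$.

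Next I would prove the local martingale property using the stochastic differential equation \eqref{eq-L-SDE}, which expresses $L(\cdot,c)$ as a sum of stochastic integrals of the predictable integrands $H_j(s) := L(s-,c)(\mu_j(s,c)-1)$ against the compensated counting processes $M_j(\cdot,c^*)$, the latter already known to be local martingales. For a stochastic integral against such a compensated point process to be a local martingale it suffices that $\int_0^t |H_j(s)|\, a_j(X(s,c^*),c^*)\,ds < \infty$ almost surely for every $t$. To verify this I would argue pathwise on a fixed compact interval $[0,t]$: the factor $\mu_j(s,c)-1$ is bounded by the standing boundedness assumption on $\mu_j$; the map $s\mapsto L(s-,c)$ is the left limit of a c\`adl\`ag path and hence bounded on $[0,t]$; and $\int_0^t a_j(X(s,c^*),c^*)\,ds<\infty$ by the non-explosivity condition \eqref{eqn:integrability}. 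Combining these three facts gives the required finiteness, so each stochastic integral, and therefore $L(\cdot,c)$ itself, is a $(\mathbb{P},\mathcal{F}_t)$ local martingale. A convenient localizing sequence is $\tau_n = \inf\{t\ge 0 : L(t,c)\ge n\}$, which increases to infinity precisely because $L$ is finite-valued.

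Finally, the supermartingale property follows from the general principle that a nonnegative local martingale with integrable initial value is a supermartingale. Taking a localizing sequence $\tau_n\uparrow\infty$ so that $L(t\wedge\tau_n,c)$ is a genuine martingale, we have $\mathbb{E}[L(t\wedge\tau_n,c)\mid\mathcal{F}_s] = L(s\wedge\tau_n,c)$ for $s\le t$; since $L\ge 0$, the conditional Fatou lemma gives $\mathbb{E}[L(t,c)\mid\mathcal{F}_s] \le \liminf_n \mathbb{E}[L(t\wedge\tau_n,c)\mid\mathcal{F}_s] = \liminf_n L(s\wedge\tau_n,c) = L(s,c)$, and setting $s=0$ shows $\mathbb{E}[L(t,c)]\le 1<\infty$ so that the relevant conditional expectations are well defined. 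The step I expect to be the main obstacle is the local martingale property: rigorously justifying the point-process integrability condition on $H_j$ requires invoking c\`adl\`ag regularity to bound $L(s-,c)$ on compact intervals (it is a priori only known to be finite) together with the non-explosivity bound, before the theorem that stochastic integration preserves the local martingale property can be applied.
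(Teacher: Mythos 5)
The paper does not actually prove this lemma: it is quoted from Bremaud (Section~VI, Theorem~T2), and the only ingredients the paper itself supplies are the remark that non-explosivity makes $0<L(t,c)<\infty$ and the (also cited) fact that $L$ solves \eqref{eq-L-SDE}. Your argument is essentially a correct reconstruction of the standard proof behind that citation: strict positivity from the explicit product \eqref{eq-def-L} together with finiteness of the $R_j(t,c^*)$; the local martingale property from the representation \eqref{eq-L-SDE} plus the point-process integration theorem (a stochastic integral of a predictable integrand $H_j$ against $M_j(\cdot,c^*)$ is a local martingale whenever $\int_0^t|H_j(s)|\,a_j(X(s,c^*),c^*)\,ds<\infty$ almost surely); and the supermartingale property from conditional Fatou applied along a localizing sequence. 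The pathwise bounds you give for $H_j(s)=L(s-,c)(\mu_j(s,c)-1)$ --- boundedness of $\mu_j$ from the paper's standing assumption, local boundedness of the c\`adl\`ag path $L(\cdot,c)$, and \eqref{eqn:integrability} --- are exactly what that theorem requires.

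One caveat: the localizing sequence $\tau_n=\inf\{t\ge 0: L(t,c)\ge n\}$ that you offer in passing does not by itself make the stopped integral a genuine martingale. Stopping there bounds $L(s-,c)$ by $n$, but the martingale property of the stopped integral needs $\mathbb{E}\int_0^{t\wedge\tau_n}|H_j(s)|\,a_j(X(s,c^*),c^*)\,ds<\infty$, and the integral $\int_0^{t\wedge\tau_n}a_j(X(s,c^*),c^*)\,ds$ is only known to be finite pathwise, not integrable. You should take the minimum of $\tau_n$ with $\sigma_n=\inf\{t\ge 0:\int_0^t\sum_j a_j(X(s,c^*),c^*)\,ds\ge n\}$ (equivalently, stop when the reaction counts reach $n$), which is the localization the integration theorem you invoke actually uses. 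This is a minor repair rather than a gap in the main line of reasoning; note also that your proof, like the paper, takes on faith the unproven assertion that the explicit $L$ of \eqref{eq-def-L} satisfies \eqref{eq-L-SDE}.
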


\begin{lemma}{\bf (Bremaud \cite{Bremaud}, Section VI, Theorem T3)} \label{lem:change-measure}
Suppose that $\mathbb{E}L(T, c) = 1$. 
Then $L(t,c)$ is a $(\mathbb{P}, \mathcal{F}_t)$ martingale over $[0,T]$.
Moreover, defining the probability measures $\mathbb{P}_c$ 
by the condition
$$\frac{d\mathbb{P}_c}{d\mathbb{P}} = L(t,c),$$ 
it follows that over $[0,T]$, $R_j(t, c^*)$ has the $(\mathbb{P}_c, \mathcal{F}_t)$-intensity
\[a_j(X(t-, c^*), c) = \mu_j(t, c) a_j(X(t-, c^*), c^*).\]
\end{lemma}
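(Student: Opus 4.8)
The plan is to establish the two assertions in turn: first that $L(\cdot,c)$ is a true martingale on $[0,T]$, and then the change of intensity.

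For the martingale property I would argue directly from Lemma~\ref{lem:stochastic-expo}. By that lemma $L(\cdot,c)$ is a nonnegative $(\mathbb{P},\mathcal{F}_t)$ local martingale, hence a supermartingale, so $t \mapsto \mathbb{E}L(t,c)$ is nonincreasing on $[0,T]$. Since $L(0,c)=1$ and, by hypothesis, $\mathbb{E}L(T,c)=1$, this nonincreasing function is forced to be constant and equal to $1$ throughout $[0,T]$. A supermartingale with constant expectation is a martingale: for $0 \le s \le t \le T$ the variable $L(s,c) - \mathbb{E}[L(t,c)\mid\mathcal{F}_s]$ is nonnegative with zero expectation, hence vanishes $\mathbb{P}$-a.s., which is the martingale identity. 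In particular $\mathbb{E}L(T,c)=1$ makes $L(T,c)$ a legitimate density defining $\mathbb{P}_c$ on $\mathcal{F}_T$, and the martingale property ensures that its restriction to each $\mathcal{F}_t$ has density $L(t,c)$, so the prescription $d\mathbb{P}_c/d\mathbb{P}|_{\mathcal{F}_t}=L(t,c)$ is consistent.

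For the change of intensity I would reduce the claim to a statement under $\mathbb{P}$ via the abstract Bayes criterion: if $LN$ is a $(\mathbb{P},\mathcal{F}_t)$ local martingale, then $N$ is a $(\mathbb{P}_c,\mathcal{F}_t)$ local martingale. By the martingale characterization of intensities it then suffices to show that
\[
\tilde{M}_j(t) := R_j(t,c^*) - \int_0^t \mu_j(s,c)\, a_j(X(s,c^*),c^*)\, ds
\]
has the property that $L(\cdot,c)\,\tilde{M}_j$ is a $(\mathbb{P},\mathcal{F}_t)$ local martingale, which identifies the predictable $(\mathbb{P}_c,\mathcal{F}_t)$ intensity of $R_j(\cdot,c^*)$ as $\mu_j(t,c)\,a_j(X(t-,c^*),c^*)$. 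The computation proceeds by the integration by parts formula for semimartingales. Writing $\tilde{M}_j = M_j(\cdot,c^*) - \int_0^\cdot (\mu_j(s,c)-1)\,a_j(X(s,c^*),c^*)\,ds$ and using the representation \eqref{eq-L-SDE} for $dL$, the product $L\,\tilde{M}_j$ decomposes into $\int L(s-,c)\,d\tilde{M}_j(s)$, the term $\int \tilde{M}_j(s-)\,dL(s,c)$, and the quadratic covariation $[L,\tilde{M}_j]$. The middle term is a stochastic integral against the $\mathbb{P}$ local martingales $M_k(\cdot,c^*)$ and carries no drift; since $\tilde{M}_j$ and $L$ jump only at the jump times of $R_j(\cdot,c^*)$, the covariation reduces to the pure jump term $\int_{(0,t]} L(s-,c)\,(\mu_j(s,c)-1)\,dR_j(s,c^*)$. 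Replacing $dR_j$ by $dM_j + a_j\,ds$ in the first and covariation terms and collecting finite variation contributions, the drift from $\int L\,d\tilde{M}_j$ is $L(s-,c)(1-\mu_j(s,c))\,a_j\,ds$ while the covariation contributes $L(s-,c)(\mu_j(s,c)-1)\,a_j\,ds$; these cancel exactly, leaving only stochastic integrals against $M_1,\dots,M_m$. Hence $L\,\tilde{M}_j$ is a $(\mathbb{P},\mathcal{F}_t)$ local martingale, as required.

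The drift cancellation is routine; the point requiring care is the abstract Bayes criterion at the level of local (rather than true) martingales, which needs a localizing sequence $\tau_n \uparrow \infty$ valid under both measures. Here the fact that $0 < L(t,c) < \infty$ ensures $\mathbb{P}_c$ and $\mathbb{P}$ are mutually absolutely continuous on each $\mathcal{F}_t$, so $\mathbb{P}$-a.s. statements transfer to $\mathbb{P}_c$-a.s. and $\tau_n \uparrow \infty$ persists under $\mathbb{P}_c$; this is what legitimizes the localization and is the main technical obstacle. With that in hand, reading off the predictable intensity from the $\mathbb{P}_c$ local martingale property of $\tilde{M}_j$ is the standard point process intensity characterization. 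This is exactly Theorem~T3 of Section~VI in \cite{Bremaud}, whose argument I would follow.
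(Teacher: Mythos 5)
The paper itself offers no proof of this lemma: it is stated as a direct quotation of Theorem~T3 of Section~VI in \cite{Bremaud}, so there is no in-paper argument to compare against. Your reconstruction is a correct rendering of the standard proof of that cited result. The first half (nonnegative local martingale $\Rightarrow$ supermartingale, constant expectation on $[0,T]$ forces the martingale identity, consistency of the family $d\mathbb{P}_c/d\mathbb{P}|_{\mathcal{F}_t}=L(t,c)$) is exactly right and uses Lemma~\ref{lem:stochastic-expo} as the paper intends. The second half — integration by parts to show $L\,\tilde M_j$ is a $\mathbb{P}$-local martingale, with the drift from $L_-\,d\tilde M_j$ cancelling against the finite-variation part of $[L,\tilde M_j]$, followed by the Bayes-type transfer to conclude $\tilde M_j$ is a $\mathbb{P}_c$-local martingale — is the argument Bremaud gives, and your flag about needing the localizing sequence to diverge under both (equivalent, since $0<L<\infty$) measures is the right technical point. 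One small imprecision: $L(\cdot,c)$ jumps at the jump times of \emph{all} the counting processes $R_k(\cdot,c^*)$, not only those of $R_j(\cdot,c^*)$; the covariation $[L,\tilde M_j]$ nevertheless reduces to the stated integral against $dR_j(\cdot,c^*)$ because $\tilde M_j$ jumps only there, so the conclusion stands.
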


\begin{cor}\label{cor-change-measure}
Under the conditions of Lemma \ref{lem:change-measure},  for each bounded measurable function $f:\posint^n \to \real$ and each $c \in I_{c^*}$  
\begin{equation}\label{eqn:law}
\mathbb{E}f(X(T, c))  = \mathbb{E}_cf(X(T, c^*)).
\end{equation}
In other words, the $\mathbb{P}_c$ law of $X(t,c^*)$ is the same as the 
$\mathbb{P}$ law of $X(t,c)$.
\end{cor}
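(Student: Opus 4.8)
The plan is to prove the stronger statement advertised in the corollary, namely that the entire $\mathbb{P}_c$-path-law of $X(\cdot,c^*)$ coincides with the $\mathbb{P}$-path-law of $X(\cdot,c)$; evaluating at time $T$ and integrating a bounded measurable $f$ against these identical time-$T$ marginals then yields \eqref{eqn:law}. The strategy is to identify both processes as non-explosive Markov jump processes driven by the same state-dependent intensities (equivalently, having the same generator) and starting from the same deterministic state $x_0$, and then to invoke uniqueness of the law of such a process.

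First I would record the dynamics on each side. Under $\mathbb{P}$, the process $X(t,c)$ defined by \eqref{eqn:random-time-change} is a pure-jump Markov process whose counting processes $R_j(\cdot,c)$ have, relative to their natural filtration, the predictable intensity $a_j(X(t-,c),c)$; equivalently, $X(\cdot,c)$ is generated by
\[
(\mathcal{A}g)(x) = \sum_{j=1}^m a_j(x,c)\bigl(g(x+\nu_j) - g(x)\bigr)
\]
acting on bounded $g:\posint^n \to \real$. On the other side, Lemma \ref{lem:change-measure} tells us that under $\mathbb{P}_c$ the counting process $R_j(t,c^*)$ has $(\mathbb{P}_c,\mathcal{F}_t)$-intensity $a_j(X(t-,c^*),c)$. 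Since $X(t,c^*) = x_0 + \sum_{j} \nu_j R_j(t,c^*)$ by \eqref{eq-X-R}, the only jumps of $X(\cdot,c^*)$ are increments $\nu_j$ occurring at rate $a_j(X(t-,c^*),c)$, so under $\mathbb{P}_c$ the process $X(\cdot,c^*)$ is a jump process with exactly the same generator $\mathcal{A}$, and by construction it too starts at $x_0$.

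Next I would argue equality of laws. Both $X(\cdot,c^*)$ under $\mathbb{P}_c$ and $X(\cdot,c)$ under $\mathbb{P}$ are non-explosive on $[0,T]$: non-explosivity of $X(\cdot,c)$ under $\mathbb{P}$ is assumed directly, while non-explosivity of $R(\cdot,c^*)$ holds $\mathbb{P}$-a.s. and hence also $\mathbb{P}_c$-a.s., since $\mathbb{P}_c$ is absolutely continuous with respect to $\mathbb{P}$ on $\sF_T$ by Lemma \ref{lem:change-measure}. Both processes therefore solve the martingale problem for $(\mathcal{A},\delta_{x_0})$. Equivalently, one may invoke the martingale characterization of point processes: a multivariate counting process whose intensities are a fixed measurable function of its own current state has a uniquely determined law in the non-explosive regime (via the random-time-change/Poisson embedding together with pathwise uniqueness of \eqref{eqn:random-time-change}). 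Either route gives that the $\mathbb{P}_c$-law of $X(\cdot,c^*)$ equals the $\mathbb{P}$-law of $X(\cdot,c)$, and in particular that their time-$T$ marginals agree, which is \eqref{eqn:law}.

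I expect the main obstacle to be precisely this uniqueness step. Specifying the state-dependent intensities pins down the law only when the process does not explode; in the presence of explosion, distinct laws can share the same pre-explosion intensities. The assumed non-explosivity of $X(\cdot,c)$ under $\mathbb{P}$, transferred to $\mathbb{P}_c$ through the absolute continuity just noted, is exactly what rules out this pathology and licenses the uniqueness theorem. Care should also be taken that the generator identification is valid for all bounded measurable (not merely bounded continuous) $f$, which is automatic here because the state space $\posint^n$ is discrete.
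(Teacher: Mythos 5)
Your proposal is correct and follows essentially the same route as the paper: identify that under $\mathbb{P}_c$ the process $X(\cdot,c^*)$ is a jump Markov process started at $x_0$ with the same state-dependent intensities $a_j(x,c)$ as $X(\cdot,c)$ under $\mathbb{P}$, and then invoke uniqueness of the law (the paper phrases this via the Kolmogorov forward equations rather than the martingale problem, but the substance is identical). Your explicit transfer of non-explosivity to $\mathbb{P}_c$ via absolute continuity is a point the paper leaves implicit, and it is a worthwhile addition since uniqueness does require it.
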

\begin{proof}
The law of a Markov process is uniquely determined by the Kolmogorov's
forward equation. The form of the forward equation depends on the
state dependence of the intensities of the jump event.  Since the $\mathbb{P}$
intensity of $R_j(t,c)$ is $a_j(X(t-,c),c)$ and the $\mathbb{P}_c$ intensity of
$R_j(t,c^*)$ is $a_j(X(t-,c^*),c)$, the state dependence are the same for both:
$a_j(x,c)$.   
Thus the Kolmogorov's forward equations 
for $p_1(x, t) = \mathbb{P}(X(t, c) = x)$ and 
$p_2(x, t) = \mathbb{P}_c(X(t,c^*) = x)$ are identical:
\[\frac{dp_i(x, t)}{dt} = \sum_{j=1}^m (p_i(x - \nu_j, t) a_j(x - \nu_j, c) -
p_i(x, t)a_j(x, c)), \quad x \in \mathbb{Z}_{+}^n,  \;\; i=1,2.\]
\end{proof} 

In summary, the condition $\mathbb{E}(L(T,c))=1$ in Lemma  \ref{lem:change-measure} 
ensures the validity of the change of measure. In the next section we 
provide some sufficient conditions for it to hold.

\subsection{Novikov type condition}\label{sec:Novikov}
In this section, we provide a Novikov type sufficient condition to ensure that
$L(t,c)$ is a martingale over $[0,T]$ or equivalently $\mathbb{E}(L(T,c))=1$. 
Under the product form of propensities $a_j(x,c)=c_j b_j(x)$, 
and without loss of generality, taking $c=c_1$, the likelihood ratio $L(t, c)$ can be written as 
\begin{equation}
L(t, c) = \left(\frac{c}{c^*}\right)^{R_1(t, c^*)} \exp\left(\int_0^t (c^* - c)b_1(X(s, c^*)) \, ds  \right). 
\end{equation}
We make the following useful observation. 
We have
\begin{equation}\label{eqn:L-estimate-1}
L(t, c) \leq \left(\frac{c}{c^*}\right)^{R_1(t, c^*)}
\end{equation}
for any $c \in I_{c^*}^+ = [c^*, c^* + \epsilon)$ and 
\begin{equation}\label{eqn:L-estimate-2}
L(t, c) \leq \exp\left(\int_0^t (c^* - c)b_1(X(s, c^*)) \, ds  \right)
\end{equation}
for any $c \in I_{c^*}^- = (c^* - \epsilon, c^*]$.
This simple observation turns out to be useful for our analysis.



\begin{theorem}\label{thm:GT-Novikov} 
Given $c \in I_{c^*}^+$, suppose that 
\begin{equation}\label{eqn:int-L-1}
\mathbb{E}\left[  \left(\frac{c}{c^*}\right)^{R_1(T, c^*)}  \right] < \infty,
\end{equation}
then $L(t, c)$ is a $(\mathbb{P}, \mathcal{F}_t)$ martingale over $[0,T]$.
\end{theorem}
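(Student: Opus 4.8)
The plan is to reduce everything to showing $\mathbb{E}L(T,c) = 1$ and then invoke Lemma~\ref{lem:change-measure}. By Lemma~\ref{lem:stochastic-expo}, $L(t,c)$ is a nonnegative $(\mathbb{P},\mathcal{F}_t)$ local martingale with $L(0,c)=1$; in particular it is a supermartingale, so $\mathbb{E}L(T,c)\le 1$ holds automatically. The whole content of the theorem is therefore the reverse inequality $\mathbb{E}L(T,c)\ge 1$, and the integrability hypothesis \eqref{eqn:int-L-1} is exactly what I would use to rule out a loss of mass in the local martingale.

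First I would fix a localizing sequence of stopping times $\tau_k\uparrow\infty$ (a.s.) for which each stopped process $L(t\wedge\tau_k,c)$ is a genuine $(\mathbb{P},\mathcal{F}_t)$ martingale; such a sequence exists by the definition of local martingale furnished by Lemma~\ref{lem:stochastic-expo}. Evaluating the martingale property at $t=T$ then gives $\mathbb{E}\big[L(T\wedge\tau_k,c)\big]=L(0,c)=1$ for every $k$, and the goal becomes passing to the limit $k\to\infty$ inside this expectation.

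The key is a pathwise domination supplied by \eqref{eqn:L-estimate-1}. For $c\in I_{c^*}^+$ we have $c/c^*\ge 1$, and $R_1(\cdot,c^*)$ is nondecreasing, so for any stopping time $\sigma\le T$ the estimate \eqref{eqn:L-estimate-1} yields
\begin{equation}\label{eqn:plan-domination}
L(\sigma,c)\ \le\ \left(\frac{c}{c^*}\right)^{R_1(\sigma,c^*)}\ \le\ \left(\frac{c}{c^*}\right)^{R_1(T,c^*)}.
\end{equation}
Applying \eqref{eqn:plan-domination} with $\sigma=T\wedge\tau_k$ bounds every member of the family $\{L(T\wedge\tau_k,c)\}_k$ by the single random variable $(c/c^*)^{R_1(T,c^*)}$, which is integrable precisely by the hypothesis \eqref{eqn:int-L-1}. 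Because $\tau_k\uparrow\infty$ and $L(\cdot,c)$ is cadlag, for almost every $\omega$ one has $T\wedge\tau_k=T$ for all large $k$, so $L(T\wedge\tau_k,c)\to L(T,c)$ almost surely. The dominated convergence theorem, with dominating function $(c/c^*)^{R_1(T,c^*)}$, then lets me interchange limit and expectation to conclude $\mathbb{E}L(T,c)=\lim_k \mathbb{E}\big[L(T\wedge\tau_k,c)\big]=1$. Lemma~\ref{lem:change-measure} immediately upgrades this to the assertion that $L(t,c)$ is a $(\mathbb{P},\mathcal{F}_t)$ martingale over $[0,T]$.

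I expect the only genuine obstacle to be the justification of this limit interchange: a nonnegative local martingale can strictly lose mass, so the argument truly depends on exhibiting an integrable dominating random variable, and the one place this is available is the crude bound \eqref{eqn:L-estimate-1}, valid only on the one-sided interval $I_{c^*}^+$ where $c/c^*\ge 1$. Everything else---existence of the localizing sequence, the supermartingale inequality, and the final upgrade---is routine given the two cited lemmas.
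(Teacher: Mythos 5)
Your proposal is correct and follows essentially the same route as the paper: localize via the local martingale property, use the pathwise bound \eqref{eqn:L-estimate-1} together with monotonicity of $R_1(\cdot,c^*)$ to dominate the stopped family by $(c/c^*)^{R_1(T,c^*)}$, and apply dominated convergence to get $\mathbb{E}L(T,c)=1$, whence Lemma~\ref{lem:change-measure} applies. The only cosmetic difference is that the paper additionally stops at $\tau_n=\inf\{t: R_1(t,c^*)\geq n\}$, which is superfluous once the dominating random variable is in hand, as your argument shows.
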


\begin{proof}
By Lemma \ref{lem:stochastic-expo} $L(t,c)$ is a local martingale. 
Thus there exists a sequence $(\sigma_n)$ of increasing stopping times 
with $\sigma_n \uparrow \infty$ such that $L(t \wedge \sigma_n,c)$ is 
a $(\mathbb{P},\mathcal{F}_t)$ martingale for each $n$.  
Define the stopping times
\[\tau_n = \inf\{t \geq 0~|~ R_1(t, c^*) \geq n\}.\]
By the non-explosivity assumption, $\tau_n \uparrow \infty$.
We define the stopped processes 
\[L_n(t, c) = L(t \wedge \sigma_n \wedge \tau_n, c).\]
Now for each $n$,  as a stopped martingale, $L_n(t, c)$ is a $\mathcal{F}_t$
martingale and hence $\mathbb{E}L_n(T, c) = 1$. 
By the estimates in \eqref{eqn:L-estimate-1},
\[L_n(T, c) \leq \left(\frac{c}{c^*}\right)^{R_1(T \wedge \sigma_n \wedge \tau_n, c^*)} \leq \left(\frac{c}{c^*}\right)^{R_1(T, c^*)}.\] 
Hence, the integrability condition \eqref{eqn:int-L-1} implies that $\mathbb{E}L(T,c) = 1$ by the dominated 
convergence theorem and therefore $L(t,c)$ is a martingale over $[0,T]$.
\end{proof}

Similar result can be reproduced for the case $c \in I_{c^*}^-$ using the estimates \eqref{eqn:L-estimate-2}. 
\begin{theorem}\label{thm:GT-Novikov-2}
Given $c \in I_{c^*}^-$, suppose that
\begin{equation}\label{eqn:int-L-2}
\mathbb{E}\left[\exp\left((c^*-c) \int_0^T b_1(X(s, c^*)) \, ds\right)\right] < \infty,
\end{equation}
then $L(t, c)$ is a $(\mathbb{P}, \mathcal{F}_t)$ martingale over $[0,T]$.
\end{theorem}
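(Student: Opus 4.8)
The plan is to mirror the proof of Theorem \ref{thm:GT-Novikov} exactly, replacing the bound \eqref{eqn:L-estimate-1} with the bound \eqref{eqn:L-estimate-2}. By Lemma \ref{lem:stochastic-expo}, $L(t,c)$ is a $(\mathbb{P},\mathcal{F}_t)$ local martingale, so there exists a localizing sequence of stopping times $(\sigma_n)$ with $\sigma_n \uparrow \infty$ such that each stopped process $L(t \wedge \sigma_n, c)$ is a genuine martingale. First I would introduce a second localizing sequence to control the integral $\int_0^t b_1(X(s,c^*))\,ds$ appearing in the exponent; since this integral equals $\int_0^t a_1(X(s,c^*),c^*)/c^*\,ds$ up to the constant $c^*$, the non-explosivity assumption \eqref{eqn:integrability} guarantees it is finite almost surely for each fixed $t$, but I need a sequence that forces the exponent to be bounded on each stopped interval. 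A natural choice is
\[
\tau_n = \inf\left\{ t \geq 0 ~\middle|~ \int_0^t b_1(X(s,c^*))\,ds \geq n \right\},
\]
which satisfies $\tau_n \uparrow \infty$ almost surely precisely because of \eqref{eqn:integrability}.

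Next I would define the doubly stopped process $L_n(t,c) = L(t \wedge \sigma_n \wedge \tau_n, c)$, which is an $\mathcal{F}_t$ martingale as a stopped martingale, so that $\mathbb{E} L_n(T,c) = 1$ for every $n$. Applying the estimate \eqref{eqn:L-estimate-2} at the stopped time and using that $c \leq c^*$ on $I_{c^*}^-$ so the exponent is nonnegative and nondecreasing in the upper limit, I obtain the domination
\[
L_n(T,c) \leq \exp\left( (c^* - c) \int_0^{T \wedge \sigma_n \wedge \tau_n} b_1(X(s,c^*))\,ds \right) \leq \exp\left( (c^* - c) \int_0^T b_1(X(s,c^*))\,ds \right).
\]
Since $L_n(T,c) \to L(T,c)$ almost surely as $n \to \infty$ (by $\sigma_n \wedge \tau_n \uparrow \infty$ and right-continuity of $L$), the integrability hypothesis \eqref{eqn:int-L-2} furnishes an integrable dominating random variable, and the dominated convergence theorem yields $\mathbb{E} L(T,c) = \lim_n \mathbb{E} L_n(T,c) = 1$.

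Finally, having established $\mathbb{E} L(T,c) = 1$, I would invoke Lemma \ref{lem:change-measure} to conclude that $L(t,c)$ is a genuine $(\mathbb{P},\mathcal{F}_t)$ martingale over $[0,T]$. The only subtlety I anticipate, and the step deserving the most care, is ensuring the monotonicity of the exponent is used correctly: the bound \eqref{eqn:L-estimate-2} requires $c \leq c^*$ so that $c^* - c \geq 0$, making the exponential increasing in the upper integration limit, which is exactly what allows replacement of $T \wedge \sigma_n \wedge \tau_n$ by $T$ in the dominating bound. This sign condition is automatically satisfied on $I_{c^*}^-$, so no additional hypotheses are needed, and the argument closes in direct parallel to the previous theorem.
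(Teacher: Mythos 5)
Your proof is correct and follows essentially the same route as the paper: the paper defines the identical stopping times $\tau_n$ and then states that the rest is analogous to Theorem \ref{thm:GT-Novikov}, which is exactly the doubly-stopped martingale plus dominated convergence argument you spell out. Your added care about the sign of $c^*-c$ making the exponent monotone in the upper limit is a correct and worthwhile detail that the paper leaves implicit.
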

\begin{proof}
Define the stopping times $\tau_n$ by
\[
\tau_n = \inf\left\{t \geq 0 \, : \, \int_0^t b_1(X(s,c^*)) ds \geq n\right\}.
\] 
The rest of the proof is similar to that of Theorem \ref{thm:GT-Novikov}. 
\end{proof}

One can get rid of the time integral by verifying the following stronger condition. 
\begin{cor}\label{thm:GT-Novikov-Cor1}
If there exists $\epsilon > 0$ such that
\begin{equation}\label{eqn:GT-Novikov-Cor1}
\sup_{s \leq T} \mathbb{E} \left[  e^{\epsilon T b_1(X(s, c^*))}   \right] < \infty,
\end{equation} 
then 
$L(t, c)$ is a $(\mathbb{P}, \mathcal{F}_t)$ martingale over $[0,T]$ 
for any $c \in I_{c^*}^-=(c^*-\epsilon,c^*)$.
\end{cor}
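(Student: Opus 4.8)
The goal is to show that the pointwise-in-time exponential moment bound \eqref{eqn:GT-Novikov-Cor1} on $b_1(X(s,c^*))$ implies the integrated exponential moment condition \eqref{eqn:int-L-2} of Theorem~\ref{thm:GT-Novikov-2}, after which that theorem immediately yields the martingale conclusion. Since $c \in I_{c^*}^- = (c^*-\epsilon,c^*)$, we have $0 < c^* - c < \epsilon$, so it suffices to control $\mathbb{E}\exp\bigl((c^*-c)\int_0^T b_1(X(s,c^*))\,ds\bigr)$ by the assumed uniform bound on $\mathbb{E} e^{\epsilon T b_1(X(s,c^*))}$.

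The plan is to convert the exponential of a time integral into an average of pointwise exponentials using Jensen's inequality. First I would write $\int_0^T b_1(X(s,c^*))\,ds = T \cdot \frac{1}{T}\int_0^T b_1(X(s,c^*))\,ds$, recognizing the inner expression as an average of $b_1(X(s,c^*))$ over $s \in [0,T]$ with respect to the normalized measure $\frac{1}{T}\,ds$. Applying Jensen's inequality to the convex function $u \mapsto e^{(c^*-c)T u}$ (or more directly to $e^{\epsilon T u}$) gives
\begin{equation}
\exp\left((c^*-c)\int_0^T b_1(X(s,c^*))\,ds\right) \leq \frac{1}{T}\int_0^T \exp\bigl((c^*-c)T\, b_1(X(s,c^*))\bigr)\,ds,
\end{equation}
since $(c^*-c)\int_0^T b_1\,ds = (c^*-c)T \cdot \frac{1}{T}\int_0^T b_1\,ds$ and Jensen moves the exponential inside the average. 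Because $c^*-c < \epsilon$ and $b_1 \geq 0$, the right-hand integrand is dominated by $\exp(\epsilon T\, b_1(X(s,c^*)))$.

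Next I would take expectations on both sides and invoke the Tonelli theorem (all integrands are nonnegative) to interchange $\mathbb{E}$ with $\frac{1}{T}\int_0^T ds$, obtaining
\begin{equation}
\mathbb{E}\exp\left((c^*-c)\int_0^T b_1(X(s,c^*))\,ds\right) \leq \frac{1}{T}\int_0^T \mathbb{E}\exp\bigl(\epsilon T\, b_1(X(s,c^*))\bigr)\,ds \leq \sup_{s \leq T}\mathbb{E}\, e^{\epsilon T b_1(X(s,c^*))}.
\end{equation}
The right-hand side is finite by hypothesis \eqref{eqn:GT-Novikov-Cor1}, so \eqref{eqn:int-L-2} holds, and Theorem~\ref{thm:GT-Novikov-2} gives the martingale property. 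The only mild subtlety, and the step I would be most careful about, is the measurability/integrability bookkeeping needed to justify Tonelli and to ensure the map $s \mapsto \mathbb{E}\, e^{\epsilon T b_1(X(s,c^*))}$ is measurable with a finite supremum; since $X(\cdot,c^*)$ is cadlag and $b_1$ is a nonnegative polynomial this is routine, but it is the point where one must actually use nonnegativity of $b_1$ (to replace the $s$-dependent factor $(c^*-c)T$ by the uniform $\epsilon T$ cleanly). Everything else is a direct application of Jensen and the previously established theorem.
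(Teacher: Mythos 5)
Your proposal is correct and follows essentially the same route as the paper's own proof: apply Jensen's inequality to the normalized time average inside the exponential, interchange the expectation and the time integral by Fubini/Tonelli, and bound the result by the supremum in \eqref{eqn:GT-Novikov-Cor1} before invoking Theorem \ref{thm:GT-Novikov-2}. No substantive differences to note.
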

\begin{proof}
Rearranging the right hand side of \eqref{eqn:int-L-2} and applying Jensen's inequality to the time average integral inside the bracket, we obtain
\begin{equation}
\begin{split}
\mathbb{E}\left[ \exp\left(\frac{1}{T}\int_0^T (c^* - c) T b_1(X(s, c^*))\,ds\right)\right]
\leq \mathbb{E}\left[  \frac{1}{T}\int_0^T   e^{(c^* - c) T b_1(X(s, c^*))}     \, ds     \right]
\end{split}
\end{equation}
By Fubini's Theorem we can commute the time integral and the expectation and hence it suffices to show that 
\[\sup_{s\leq T}\mathbb{E}\left[e^{\left( \epsilon T b_1(X(s, c^*))\right)}\right]\]
is finite for $\epsilon$ satisfying $\epsilon > c^* - c$.
\end{proof}


\section{Differentiation inside the integral}\label{sec:swap}
In this section, we provide a sufficient condition for the commutation 
\eqref{eq-cond2} of Condition 2 via the use of
Theorem \ref{thm:Glynn} in appendix. Referring to Theorem \ref{thm:Glynn}, 
we take $G$ to be 
\[G(c) = f(X(T, c^*)) L(T, c).\]
We shall assume the product form with $c=c_1$. Then it follows at once 
from \eqref{eq-def-L} that
\[\frac{\partial}{\partial c}\ln L(T, c) = \frac{1}{c}R_1(T, c^*) - \int_0^T  b_1(X(s, c^*))  \,ds,\]
hence
\[\frac{\partial}{\partial c} L(T, c) = L(T, c)  \left(\frac{1}{c}R_1(T, c^*) - \int_0^T  b_1(X(s, c^*))  \,ds \right)= \frac{1}{c}L(T, c) M_1(T, c^*).\]
Then a Lipschitz constant $K(\omega)$ (independent of $c$) for $G$ on the interval $I_{c^*}$ is 
\[K= |f(X(T, c^*))M_1(T, c^*)| \sup_{c \in I_{c^*}} \frac{1}{c}L(T, c).\]

We first consider $c \in I_{c^*}^+$ (the right hand sensitivity) , in which case we have 
\[L(T, c) \leq \left(\frac{c}{c^*}\right)^{R_1(T, c^*)} \leq \left(\frac{c^* + \epsilon}{c^*}\right)^{R_1(T, c^*)} . \]
Hence in order to justify the integrability of $K$, it suffices to show that 
\[f(X(T, c^*))M_1(T, c^*)\left(\frac{c^* + \epsilon}{c^*}\right)^{R_1(T, c^*)} \] 
is integrable. We also note that we may shrink the interval $I_{c^*}=(c^*-\epsilon,c^*+\epsilon)$ 
to be as small as we wish.

\begin{theorem}\label{thm:swap-right}
Assuming the product form \eqref{eq-mass-action} with $c=c_1$, suppose the following conditions are satisfied:
\begin{itemize}
\item $\mathbb{E}  [|f(X(T, c^*))|^3] < \infty$;
\item there exists $\epsilon > 0$ such that,
\begin{equation}\label{eqn:main-integrability-right}
\mathbb{E}\left[  \left(\frac{c^* +\epsilon}{c^*}\right)^{R_1(T, c^*)}  \right] < \infty.
\end{equation}
\end{itemize}
Then 
\begin{equation*}
\lim_{c \to c^{*+}} \frac{\mathbb{E} f(X(T, c)) - \mathbb{E} f(X(T, c^*))}{c - c^*} = 
\mathbb{E} \left[f(X(T, c^*)) \lim_{c\to c^{*+}}\frac{L(T, c) - L(T, c^*)}{c-c^*}  \right].
\end{equation*}
\end{theorem}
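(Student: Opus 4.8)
The plan is to invoke Theorem~\ref{thm:Glynn} with $G(c)=f(X(T,c^*))L(T,c)$, for which the only genuinely nontrivial hypothesis is the integrability of the uniform Lipschitz constant $K$. The a.s.\ Lipschitz continuity of $c\mapsto G(c)$ on a right-neighbourhood of $c^*$ and the a.s.\ existence of the one-sided derivative are immediate from the explicit form \eqref{eq-def-L}, which shows that $L(T,\cdot)$ is smooth in $c$ with $\frac{\partial}{\partial c}L(T,c)=\tfrac1c L(T,c)M_1(T,c^*)$. As recorded just before the statement, the bounds $L(T,c)\le(c/c^*)^{R_1(T,c^*)}\le r^{R_1(T,c^*)}$ and $1/c\le 1/c^*$ (valid for $c\in I_{c^*}^+$, writing $r=(c^*+\epsilon)/c^*$) give $K\le \tfrac1{c^*}\,|f(X(T,c^*))M_1(T,c^*)|\,r^{R_1(T,c^*)}$, so it suffices to prove
\[
\mathbb E\!\left[\,|f(X(T,c^*))|\;|M_1(T,c^*)|\;r^{R_1(T,c^*)}\right]<\infty .
\]

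First I would split this expectation by H\"older's inequality with three exponents all equal to $3$:
\[
\mathbb E\!\left[|f|\,|M_1|\,r^{R_1}\right]\le \big(\mathbb E|f|^3\big)^{1/3}\big(\mathbb E|M_1|^3\big)^{1/3}\big(\mathbb E\,r^{3R_1}\big)^{1/3},
\]
where I abbreviate $f=f(X(T,c^*))$, $M_1=M_1(T,c^*)$, $R_1=R_1(T,c^*)$. The first factor is finite by the cube-integrability hypothesis (this is exactly why the exponent $3$ is assumed for $f$). For the third factor I would shrink $\epsilon$: since exponential integrability at one radius implies it at every smaller one, I may choose the working radius $\epsilon'$ small enough that $3\ln\frac{c^*+\epsilon'}{c^*}\le\ln\frac{c^*+\epsilon}{c^*}$, whence $\mathbb E\,r^{3R_1}<\infty$ follows from \eqref{eqn:main-integrability-right}.

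The crux, and the step I expect to be the main obstacle, is the middle factor $\mathbb E|M_1|^3<\infty$, because $M_1=R_1-A_T$ with $A_T=\int_0^T a_1(X(s,c^*),c^*)\,ds$ involves the compensator $A_T$, which does not appear in the hypotheses. The contribution of $R_1$ is harmless, as $\mathbb E\,R_1^3<\infty$ since the cubic is dominated by the exponential moment in \eqref{eqn:main-integrability-right}. To control $A_T$ I would use the Dol\'eans exponential: for any $\theta$, the process $\exp\!\big(\theta R_1(t,c^*)-(e^\theta-1)\int_0^t a_1(X(s,c^*),c^*)\,ds\big)$ is a nonnegative local martingale by the stochastic-exponential result underlying Lemma~\ref{lem:stochastic-expo} (Bremaud's Theorem~T2), applied with the bounded constant multiplier $\mu\equiv e^\theta$, hence a supermartingale with expectation $\le 1$. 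Taking $\theta=-s<0$ gives $\mathbb E\big[\exp\big((1-e^{-s})A_T-sR_1\big)\big]\le 1$, and then a single application of H\"older's inequality interpolating this bound against the exponential moment $\mathbb E\,e^{(s/(p-1))R_1}<\infty$ (finite for $p$ large) yields $\mathbb E\big[e^{\gamma A_T}\big]<\infty$ for some $\gamma>0$. In particular all moments of $A_T$ are finite, so $\mathbb E\,A_T^3<\infty$ and therefore $\mathbb E|M_1|^3<\infty$.

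With all three H\"older factors finite we obtain $\mathbb E[K]<\infty$, and Theorem~\ref{thm:Glynn} then delivers the claimed commutation of the one-sided limit and the expectation, completing the proof.
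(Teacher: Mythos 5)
Your argument reaches the same three integrability conditions as the paper's proof (your three-exponent H\"older with $(3,3,3)$ is interchangeable with the paper's use of $3abc\le a^3+b^3+c^3$, and the $\epsilon$-shrinking for the cubed factor matches the paper's remark that $I_{c^*}$ may be taken as small as desired), but you treat the key factor $\mathbb{E}|M_1(T,c^*)|^3$ by a genuinely different route. The paper observes that the quadratic variation of the local martingale $M_1$ is $R_1$ and applies the Burkholder--Davis--Gundy inequality, $\mathbb{E}|M_1(T,c^*)|^4\le C\,\mathbb{E}[R_1(T,c^*)^2]$, so the compensator $A_T=\int_0^T a_1(X(s,c^*),c^*)\,ds$ never needs to be estimated separately: its contribution is absorbed into the martingale, whose moments are controlled by $R_1$ alone, and $\mathbb{E}[R_1^2]<\infty$ is immediate from \eqref{eqn:main-integrability-right}. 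You instead attack $A_T$ directly via the Dol\'eans exponential with constant multiplier $e^{-s}$, obtaining $\mathbb{E}[\exp((1-e^{-s})A_T-sR_1)]\le 1$ and interpolating by H\"older against the exponential moment of $R_1$ to conclude $\mathbb{E}[e^{\gamma A_T}]<\infty$. This is correct and in fact proves strictly more --- it shows that \eqref{eqn:main-integrability-right} implies a version of the left-hand condition \eqref{eqn:main-integrability-left}, since $a_1=c^*b_1$ --- but at the cost of a longer argument; the BDG route is the more economical one for the purpose at hand.

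One genuine omission: Theorem~\ref{thm:Glynn} applied to $G(c)=f(X(T,c^*))L(T,c)$ only yields the commutation of the derivative with $\mathbb{E}$ for the map $c\mapsto\mathbb{E}[f(X(T,c^*))L(T,c)]$. The stated conclusion has $\mathbb{E}f(X(T,c))$ in its difference quotient, so you must first identify $\mathbb{E}f(X(T,c))=\mathbb{E}[f(X(T,c^*))L(T,c)]$ for $c\in I_{c^*}^+$. That is the change-of-measure step: the paper opens its proof by noting that \eqref{eqn:main-integrability-right} implies, via Theorem~\ref{thm:GT-Novikov}, that $L(\cdot,c)$ is a martingale on $[0,T]$, so that Corollary~\ref{cor-change-measure} applies. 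Your proposal never invokes this; adding that one sentence closes the gap.
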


\begin{proof}
First we note that by Theorem \ref{thm:GT-Novikov},
\eqref{eqn:main-integrability-right} implies the validity of the change of
measure for $c \in I_{c^*}^+$. 

Now we need to verify the limit using Theorem \ref{thm:Glynn}.  
Using the inequality $3abc \leq a^3 + b^3 + c^3$, we can separate the terms and provide
the following sufficient conditions, 
\begin{equation}\label{eqn:3-conditions}
\begin{split}
&\mathbb{E} [ |f(X(T, c^*))|^3] < \infty,\\
&\mathbb{E} [ |M_1(T, c^*)|^3] < \infty,\\
&\mathbb{E} \left[ \left(\frac{c^* + \epsilon}{c^*}\right)^{3R_1(T, c^*)} \right] < \infty.
\end{split}
\end{equation}
It is sufficient to show that the third condition implies the second condition. 
Since the quadratic variation of the local martingale $M_1(t,c^*)$ is $R_1(t,c^*)$, by the Burkholder-Davis-Gundy (BDG) inequality \cite{protter2005stochastic}, 
\[\mathbb{E} ( |M_1(T, c^*)|^4) \leq C \mathbb{E} [R_1(T,c^*)^2]. \]
for some constant $C$.
It is obvious that the right hand side is integrable given the second
condition. Since $|M_1(T, c^*)|^4$ is integrable the result follows. 
\end{proof}

Similarly, for the left hand side sensitivity, we have 
\[L(T, c) \leq \exp\left(\int_0^T (c^* - c)b_1(X(s, c^*)) \, ds  \right) \leq  \exp\left(\int_0^T \epsilon b_1(X(s, c^*)) \, ds  \right)\]
for $c \in I_{c^*}^-$.
Hence, the Lipschitz constant is proportional to 
\[|f(X(T, c^*))M_1(T, c^*)| \exp\left(\int_0^T \epsilon b_1(X(s, c^*)) \, ds  \right).\]
It boils down to verifying the following three integrability conditions,
\begin{equation}\label{eqn:3-conditions-left}
\begin{split}
&\mathbb{E} [ |f(X(T, c^*))|^3] < \infty,\\
&\mathbb{E} [ |M_1(T, c^*)|^3] < \infty,\\
&\mathbb{E}\left[\exp\left(\int_0^T 3\epsilon b_1(X(s, c^*)) \, ds  \right)\right] < \infty.
\end{split}
\end{equation}
We have the following result concerning the left hand side sensitivity.

\begin{theorem}\label{thm:swap-left}
Assuming the product form \eqref{eq-mass-action} with $c=c_1$, suppose
further that
\begin{itemize}
\item $\mathbb{E} [ |f(X(T, c^*))|^3] < \infty$;
\item there exists $\epsilon > 0$ such that,
\begin{equation}\label{eqn:main-integrability-left}
\mathbb{E}\left[\exp\left(\int_0^T \epsilon b_1(X(s, c^*)) \, ds  \right)\right] < \infty.
\end{equation}
\end{itemize}
Then
\begin{equation*}
\lim_{c \to c^{*-}} \frac{\mathbb{E} f(X(T, c)) - \mathbb{E} f(X(T, c^*))}{c - c^*} = 
\mathbb{E} \left[f(X(T, c^*)) \lim_{c\to c^{*-}}\frac{L(T, c) - L(T, c^*)}{c-c^*}  \right].
\end{equation*}
\end{theorem}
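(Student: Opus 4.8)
The plan is to mirror the structure of the proof of Theorem~\ref{thm:swap-right}, substituting the right-hand estimate \eqref{eqn:L-estimate-1} with the left-hand estimate \eqref{eqn:L-estimate-2} throughout. First I would invoke Theorem~\ref{thm:GT-Novikov-2} to observe that the hypothesis \eqref{eqn:main-integrability-left} immediately guarantees that $L(t,c)$ is a martingale over $[0,T]$ for $c \in I_{c^*}^-$, which secures the validity of the change of measure \eqref{eqn:change-of-measure} on the left interval. With Condition~1 in hand, the task reduces to verifying the commutation \eqref{eq-cond2}, which I would establish by applying Theorem~\ref{thm:Glynn} to $G(c) = f(X(T,c^*))L(T,c)$.

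The core step is the integrability of the Lipschitz constant $K$. As already computed preceding the statement, for $c \in I_{c^*}^-$ the bound \eqref{eqn:L-estimate-2} gives $L(T,c) \leq \exp\bigl(\int_0^T \epsilon\, b_1(X(s,c^*))\,ds\bigr)$, so $K$ is dominated (up to the factor $1/c$, which is bounded on $I_{c^*}$) by
\[
|f(X(T,c^*))\,M_1(T,c^*)|\,\exp\!\left(\int_0^T \epsilon\, b_1(X(s,c^*))\,ds\right).
\]
I would then apply the elementary inequality $3abc \leq a^3 + b^3 + c^3$ to split the expectation of this product into the three separate conditions listed in \eqref{eqn:3-conditions-left}. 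The first condition is exactly the moment hypothesis on $f$, and the third follows by replacing $\epsilon$ with $\epsilon/3$ at the outset (noting, as remarked in the right-hand case, that we are free to shrink $\epsilon$). The only remaining work is the second condition, $\mathbb{E}[|M_1(T,c^*)|^3]<\infty$.

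To handle $\mathbb{E}[|M_1(T,c^*)|^3]$, I would reuse the Burkholder--Davis--Gundy argument from the previous proof verbatim: since the quadratic variation of the local martingale $M_1(t,c^*)$ is $R_1(t,c^*)$, BDG yields $\mathbb{E}[|M_1(T,c^*)|^4] \leq C\,\mathbb{E}[R_1(T,c^*)^2]$, and it suffices to show the latter is finite. The point where the two theorems genuinely diverge is here: in the right-hand case the exponential-moment hypothesis was stated directly in terms of $R_1(T,c^*)$, so $\mathbb{E}[R_1(T,c^*)^2]<\infty$ was transparent, whereas now the hypothesis controls $\int_0^T b_1(X(s,c^*))\,ds$ rather than the count. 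I would therefore need to deduce a polynomial (indeed exponential) moment bound on $R_1(T,c^*)$ from the finiteness of $\mathbb{E}[\exp(\epsilon\int_0^T b_1\,ds)]$. The natural route is to condition on the Poisson structure: writing $R_1(T,c^*) = Y_1(\int_0^T a_1(X(s,c^*),c^*)\,ds)$ with $Y_1$ a unit-rate Poisson process and $a_1 = c^* b_1$, the conditional moment generating function of a Poisson variable gives, for any $\theta>0$, $\mathbb{E}[e^{\theta R_1(T,c^*)}] = \mathbb{E}[\exp((e^\theta-1)c^*\int_0^T b_1(X(s,c^*))\,ds)]$, which is finite once $(e^\theta-1)c^* \leq \epsilon$. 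This exponential moment on the count dominates every polynomial moment, in particular $\mathbb{E}[R_1(T,c^*)^2]$, closing the argument.

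I expect this last deduction to be the main obstacle, since it is the one genuinely new ingredient relative to Theorem~\ref{thm:swap-right}: the left-hand hypothesis is phrased in terms of the \emph{intensity} (the time integral of $b_1$) rather than the \emph{event count} $R_1$, and bridging the two requires the Poisson conditioning identity above rather than a direct domination. Everything else---the appeal to Theorem~\ref{thm:GT-Novikov-2}, the $3abc$ splitting, and the BDG step---is a routine transcription of the right-hand proof with \eqref{eqn:L-estimate-2} in place of \eqref{eqn:L-estimate-1}.
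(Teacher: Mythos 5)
Your overall architecture matches the paper's: invoke Theorem~\ref{thm:GT-Novikov-2} for the change of measure, split the Lipschitz bound via $3abc\le a^3+b^3+c^3$ into the three conditions \eqref{eqn:3-conditions-left}, and reduce $\mathbb{E}[|M_1(T,c^*)|^3]<\infty$ via BDG to $\mathbb{E}[R_1(T,c^*)^2]<\infty$. The one step where you diverge from the paper is exactly the step you identify as the ``main obstacle,'' and there your argument contains a genuine error. You claim the identity
\[
\mathbb{E}\bigl[e^{\theta R_1(T,c^*)}\bigr]=\mathbb{E}\Bigl[\exp\bigl((e^\theta-1)\,c^*\textstyle\int_0^T b_1(X(s,c^*))\,ds\bigr)\Bigr],
\]
by ``conditioning on the Poisson structure.'' This equality is false for state-dependent intensities: in the random time change representation the internal time $\int_0^T a_1(X(s,c^*),c^*)\,ds$ is itself a functional of the driving Poisson process $Y_1$ (the state $X$ is built from $Y_1$), so $Y_1$ evaluated at that random time is not conditionally Poisson with that mean. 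One can check the failure already at second moments: writing $\Lambda_T=\int_0^T a_1\,ds$, the identity would force $\mathbb{E}[M_1(T)\Lambda_T]=0$, whereas for, say, the pure birth process $S\to 2S$ one computes $\mathbb{E}[M_1(T)\Lambda_T]>0$. The conclusion you want (an exponential moment of $R_1$) can be salvaged — e.g. via the exponential supermartingale $\exp\bigl(\theta R_1(t)-(e^\theta-1)\int_0^t a_1\,ds\bigr)$ together with Cauchy--Schwarz — but that is a different argument from the one you wrote.

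The paper avoids this issue entirely and more cheaply: since only $\mathbb{E}[R_1(T,c^*)^2]<\infty$ is needed (not a full exponential moment), it uses the decomposition $R_1(T,c^*)=M_1(T,c^*)+\int_0^T a_1(X(s,c^*),c^*)\,ds$ together with $(a+b)^2\le 2(a^2+b^2)$, bounding $\mathbb{E}[M_1(T,c^*)^2]$ by a second application of BDG, $\mathbb{E}[M_1(T,c^*)^2]\le\bar C\,\mathbb{E}[R_1(T,c^*)]=\bar C\,\mathbb{E}[\int_0^T a_1\,ds]$, and noting that all polynomial moments of $\int_0^T b_1\,ds$ follow trivially from the exponential integrability hypothesis \eqref{eqn:main-integrability-left}. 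You should replace your Poisson-conditioning step with this elementary martingale decomposition (or with the corrected supermartingale argument); as written, that step does not hold.
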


\begin{proof}
We only need to show the second integrability condition in \eqref{eqn:3-conditions-left} holds. 
Note that $M_1(t, c^*)$ in the second term is a local martingale, we apply the  (BDG) inequality such that
\[\mathbb{E} \{ |M_1(T, c^*)|^4\} \leq C \mathbb{E} [R_1(T, c^*)^2] \]
for some constant $C$. 
Hence, it is sufficient to verify that $\mathbb{E} [R_1(T, c^*)^2] < \infty$.
Applying the BDG inequality again to $\mathbb{E}[M_1(T, c^*)^2]$, there exists some constant 
$\bar{C}$ such that
\[\mathbb{E}[M_1(T, c^*)^2] \leq \bar{C} \mathbb{E} [R_1(T, c^*)] = \bar{C} \mathbb{E} \left[\int_0^T a_1(X(s, c^*), c^*) \, ds\right] < \infty.\] 
Owing to the simple inequality $(a+b)^2 \leq 2(a^2 + b^2)$,
\[\mathbb{E} [R_1(T, c^*)^2] \leq 2 \mathbb{E} \left[\left(\int_0^T a_1(X(s, c^*), c^*)\, ds\right)^2\right] + 2\mathbb{E}[M_1(T, c^*)^2] < \infty.\]
\end{proof}

\begin{remark}\label{rem-rightleft}
We note that the conditions of Theorem \ref{thm:swap-right} guarantee 
the existence of the right hand derivative (sensitivity) and that the 
GT method would provide an unbiased estimator of it. 
Likewise for Theorem \ref{thm:swap-left}. However, the conditions of these
theorems include the restrictive exponential integrability conditions
\eqref{eqn:main-integrability-right} and  \eqref{eqn:main-integrability-left}. 
We do know from \cite{APA} that the existence of sensitivity can be guaranteed
under milder conditions. Thus, if we assume the existence of sensitivity 
at $c^*$, then verification of either the conditions of Theorem \ref{thm:swap-right} 
or those of Theorem \ref{thm:swap-left} can guarantee 
the validity of the GT method. This will be our focus in the next section. 
\end{remark}

\section{Sufficient conditions in terms of the network}
The conditions of of Theorems \ref{thm:swap-right} and \ref{thm:swap-left} 
are not directly stated in terms of a chemical reaction network. 
A chemical reaction network is characterized by the stoichiometric matrix
$\nu$ (whose columns are the vectors $\nu_j$) and the propensity functions 
$a_j(x,c)$ which we assume to be of the product form
\eqref{eq-mass-action}.
 
We shall focus on the case that $f:\posint^n \to \real$ is of polynomial
growth. This may be stated by the condition that there exists $C>0$ and $r \in
\posint$ such that 
\[
|f(x)| \leq C (1 + \|x\|^r) \quad \forall x \in \posint^n.
\]
In this case, there exist multiple results in the literature that guarantee 
the condition that $|f(X(T,c^*)|^3$ is integrable \cite{rathinam2015Quarterly,  gupta2014scalable, engblom2014AppliedMath}. 

On the other hand, the
exponential integrability conditions \eqref{eqn:main-integrability-right} or
\eqref{eqn:main-integrability-left} are harder to satisfy. 
When $b_1(x)$ is linear, the condition \eqref{eqn:GT-Novikov-Cor1} is implied
by the conditions for the 
{\em uniform light-tailedness} property presented in \cite{gupta2014scalable},
and since \eqref{eqn:GT-Novikov-Cor1} implies
\eqref{eqn:main-integrability-left}, this provides a sufficient condition 
for the validity of the GT method. However, the {\em uniform light-tailedness}
property presented in \cite{gupta2014scalable} may be too stringent as it
is concerned with the supremum over the infinite time horizon $[0,\infty)$.

\begin{remark}\label{rem-mono-system}
  When the reaction network consists only of reactions of the simple
  monomolecular form $S_i \to S_j$ or $\emptyset \to S_i$ or $S_i \to
  \emptyset$ with the stochastic mass action form of propensities
  \cite{Gillespie77}, then the species populations can be shown to be
  sums of multinomial and Poisson random variables \cite{jahnke2007solving}.
  Since Poisson and multinomial random variables $X$ satisfy exponential
  integrability ($\mathbb{E}(e^{\epsilon |X|})<\infty$ for each $\epsilon>0$),
  and moreover, since the propensities are linear, \eqref{eqn:GT-Novikov-Cor1}
  and hence \eqref{eqn:main-integrability-left} may be verified.
\end{remark}

Given a fixed initial state $x_0 \in \mathbb{Z}_+^n$ let $\mathcal{S}_{x_0}
\subset \mathbb{Z}_+^n$ denote the set of all states that can be reached by the 
process starting at $x_0$. Thus $\mathcal{S}_{x_0}$ is the effective state space of 
the process $X(t,c)$ and it may be finite or infinite.
We remark that if $\mathcal{S}_{x_0}$ is finite, then the validity 
of the GT method follows trivially. As a consequence, the
case of interest to us is when $\mathcal{S}_{x_0}$ is infinite. 

In the rest of this section, we present some sufficient conditions that imply
\eqref{eqn:main-integrability-right} (in the context of reaction networks).
Our main idea is to focus on some reactions $R_j$ which may be
easily shown to satisfy the
condition $\mathbb{E}(e^{\epsilon R_j(t)})<\infty$ for some $\epsilon>0$,
and then to bound other reactions $R_k$ in terms of $R_j$, that is, to obtain
an almost sure bound of the form
\[
R_k(t) \leq a + b R_j(t).
\]
The non-negativity of the species population $X(t)$ in a reaction networks
implies linear inequality relations among the reaction counts $R_j(t)$
via
\[
X(t) = x_0 + \nu R(t) \geq 0.
\]
Thus we expect to bound some reactions in terms of the other. 
With this in mind, we define a reaction $j$ to be {\it unconsuming} if its associated
stoichiometric vector $\nu_j$ is nonnegative: $\nu_{ij} \geq 0$ for
$i=1,\dots,n$. We shall call a reaction {\it consuming} if it is not
unconsuming. We shall use $\mathcal{C}$ to denote the indices $j$ of all 
consuming reactions and use $\mathcal{U}$ to denote the indices $j$ of all
unconsuming reactions.

We note that an unconsuming reaction may not be bounded above in terms of
another reaction. Motivated by this fact, we introduce the following useful property regarding the unconsuming reactions
which may or may not hold in a given network.
\begin{assume}\label{assume-unconsuming-reaction}
Given $t > 0$, for any $j \in \mathcal{U}$, there exists $\epsilon>0$ such that 
\[
\mathbb{E}(e^{\epsilon R_j(t, c)}) < \infty.
\]
\end{assume}
\begin{remark}\label{remark:bounded-unconsuming-reaction}
Property \ref{assume-unconsuming-reaction} is readily satisfied if all the propensities of the unconsuming reactions are bounded on $\mathcal{S}_{x_0}$. 
To see this, suppose that there exists $K>0$ such that the propensity $a_j(x,c)$  satisfies 
\[
a_j(x,c) \leq K \quad \forall x \in \mathcal{S}_{x_0}.
\]
Then $\mathbb{E}(e^{\epsilon  R_j(t,c)}) < \infty$ for every $\epsilon>0$.
By the random time change representation, 
\[
R_j(t,c)=Y_j \left(\int_0^t a_j(X(s,c),c) ds\right) \leq Y_j(K t).
\]
Thus $R_j(t,c) \leq Y_j(K t)$ and consequently
\[
\mathbb{E}(e^{\epsilon R_j(t,c)}) \leq 
\mathbb{E}(e^{\epsilon Y_j(K t)}).
\] 
The result follows from the fact that $\mathbb{E}(e^{\epsilon Y_j(Kt)}) < \infty$
for every $\epsilon>0$.
\end{remark}
We note that, if all the unconsuming reactions are of the form
$\emptyset \to S_i$ or $S_j \to S_j + S_i$ 
where the copy number of species $S_j$ is bounded on $\mathcal{S}_{x_0}$, then
Remark~\ref{remark:bounded-unconsuming-reaction} readily applies.

Without loss of generality, we are interested in the parameter $c=c_1$ 
at a nominal value $c^*>0$, i.e., the parameter of reaction $1$.
If reaction $1 \in \mathcal{U}$, i.e., reaction $1$ is unconsuming, then 
the required bound \eqref{eqn:main-integrability-right} is automatically
satisfied if Property~\ref{assume-unconsuming-reaction} holds.
If reaction $1 \in \mathcal{C}$ and its propensity is unbounded on $\mathcal{S}_{x_0}$,
it may yet be possible to bound $R_1(t)$ above in terms of the unconsuming
reactions and furthermore if Property~\ref{assume-unconsuming-reaction} holds, then
\eqref{eqn:main-integrability-right} holds. 
\begin{example}
As a motivating example, let us consider the chemical kinetics example with two species and three reactions:
\[ S_1 \to S_2, \quad S_1 + S_2 \to \emptyset,  \quad  \emptyset \to S_1 \]
with 
\[
\nu_1 = (-1,1)^{\textrm{T}}, \;\; \nu_2=(-1,-1)^{\textrm{T}},  \;\; \nu_3=(1,0)^{\textrm{T}}.
\]
The propensities are given by $a_1(x)= c_1 x_1$, $a_2(x)=c_2 x_1 x_2$ and
$a_3(x)=c_3$. We note that $\mathcal{S}_{x_0}$ is unbounded. Since the propensity of
reaction channel $3$ is constant, by
Remark~\ref{remark:bounded-unconsuming-reaction}, there exists $\epsilon>0$
such that
\[
\mathbb{E}(e^{\epsilon R_3(t)}) < \infty.
\]
Thus \eqref{eqn:main-integrability-right} holds when the parameter of interest is
$c_3$. However, the other two reaction channels have unbounded propensities.

In order to bound $R_1(t)$ and $R_2(t)$ in terms of $R_3(t)$, 
we make explicit use of the fact that species population process $X(t)$ remains 
nonnegative. The process at any time $t \geq 0$ satisfies \eqref{eq-X-R}
\[
X(t) = x_0 + \nu R(t)
\] 
and since $X(t) \geq 0$ we have that $x_0 + \nu R(t) \geq 0$. 
We readily see that 
\begin{equation}\label{eqn:bound-consuming-reaction}
R_1(t) \leq R_1(t) + R_2(t) \leq x_{0, 1} + R_3(t).
\end{equation}
Now \eqref{eqn:bound-consuming-reaction} readily implies that 
\[
\mathbb{E}(e^{\epsilon R_i(t)}) < \infty
\]
for $i=1,2$, showing the condition \eqref{eqn:main-integrability-right}
with respect to all three parameters. 
\end{example}

This example suggests the possibility that the relation $x_0 + \nu R(t) \geq
0$ may imply that $R_1(t)$ (the reaction channel of interest) is bounded above 
in terms of a positive affine combination of those unconsuming reactions which satisfy Property~\ref{assume-unconsuming-reaction}. In general, this determination could be
made as follows. 

Suppose that $1 \in \mathcal{C}$.
The fact that $X(t) \geq 0$ can be expressed by
\[
x_0 +  \sum_{j \in \mathcal{C}} \nu_j R_j(t) + \sum_{j \in \mathcal{U}}
\nu_j R_j(t) \geq 0.
\]
For $j \in \mathcal{U}$ let $\mu_j \in \real^n$ be defined by 
$(\mu_j)_i = \max\{0,\nu_{i,j}\}$. Then $X(t) \geq 0$ implies 
\[
x_0 + \sum_{j \in \mathcal{C}} \nu_j R_j(t) + \sum_{j \in \mathcal{U}}
\mu_j R_j(t) \geq 0.
\]   
Letting 
\[
y=x_0 + \sum_{j \in \mathcal{U}} \mu_j R_j(t)
\]
and noting that $y \geq x_0$,
motivates the linear programming (feasibility) problem:
\begin{equation}\label{eq-lin-program}
y + \sum_{j \in \mathcal{C}} \nu_j \xi_j \geq 0, \;\; \xi \geq 0,
\end{equation}
where $\xi \in \real^n$. The feasible region for $\xi$ is given by 
a convex polytope $\mathcal{R}_y$ which may be unbounded. If $\mathcal{R}_y$ 
is bounded in the $\xi_1$ direction, then one can obtain an upper bound 
for $R_1(t)$ as an affine combination of $R_j(t)$ for $j \in \mathcal{U}$. 
We note that whether $\mathcal{R}_y$ is bounded in the $\xi_1$ direction 
or not depends only on $\nu_j$ for $j \in \mathcal{C}$ and not on $y$. 
   
Then Property~\ref{assume-unconsuming-reaction} can be used to obtain
\eqref{eqn:main-integrability-right}. We summarize this discussion as a theorem.
\begin{theorem}\label{thm-main}
Given a non-explosive chemical reaction network with product form propensity functions,
suppose that Property~\ref{assume-unconsuming-reaction} holds and 
that either reaction $1$ is unconsuming
or the feasible region of the linear program \eqref{eq-lin-program} is 
bounded in the first coordinate $\xi_1$.
Then \eqref{eqn:main-integrability-right} holds. 
\end{theorem}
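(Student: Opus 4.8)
The plan is to reduce the geometric hypothesis of the theorem to an almost sure affine bound of the form
$R_1(T,c^*) \leq a + \sum_{j \in \mathcal{U}} b_j R_j(T,c^*)$
with deterministic nonnegative constants $a, b_j$, and then to convert the exponential integrability of the unconsuming counts supplied by Property~\ref{assume-unconsuming-reaction} into \eqref{eqn:main-integrability-right}. First I would note that it suffices to produce some $\theta > 0$ with $\mathbb{E}[e^{\theta R_1(T,c^*)}] < \infty$, since then taking $\epsilon = c^*(e^{\theta}-1) > 0$ gives $((c^*+\epsilon)/c^*)^{R_1(T,c^*)} = e^{\theta R_1(T,c^*)}$, which is exactly \eqref{eqn:main-integrability-right}. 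In the case $1 \in \mathcal{U}$ such a $\theta$ is furnished directly by Property~\ref{assume-unconsuming-reaction}, so that case is immediate.

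The substantive case is $1 \in \mathcal{C}$. For each sample path set $y = x_0 + \sum_{j\in\mathcal{U}}\mu_j R_j(t)$. Using $\mu_j \geq \nu_j$ componentwise together with $X(t) \geq 0$ from \eqref{eq-X-R}, the vector $\xi^\star = (R_j(t))_{j\in\mathcal{C}}$ is feasible for \eqref{eq-lin-program} with this $y$. I would then invoke linear programming duality. Letting $A$ be the matrix whose columns are $\nu_j$ for $j \in \mathcal{C}$, consider the primal $\max \xi_1$ over the feasible set \eqref{eq-lin-program}; its dual feasible region is $D = \{\lambda \geq 0 : A^{\mathrm{T}}\lambda \leq -e_1\}$. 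Via a theorem of the alternative, and using that the recession cone $\{d \geq 0 : A d \geq 0\}$ and the set $D$ depend only on the consuming stoichiometric vectors and not on $y$, the hypothesis that the feasible region is bounded in the $\xi_1$ direction is equivalent to $D$ being nonempty. Fixing any $\lambda \in D$ and applying weak duality to the feasible point $\xi^\star$ yields, for every $\omega$ simultaneously,
\[
R_1(t) = \xi_1^\star \leq \lambda^{\mathrm{T}} y = \lambda^{\mathrm{T}} x_0 + \sum_{j\in\mathcal{U}}(\lambda^{\mathrm{T}} \mu_j)\, R_j(t),
\]
which is the desired affine bound with $a = \lambda^{\mathrm{T}} x_0 \geq 0$ and $b_j = \lambda^{\mathrm{T}} \mu_j \geq 0$.

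To close, I would exponentiate to obtain $e^{\theta R_1(T)} \leq e^{\theta a}\prod_{j\in\mathcal{U}} e^{\theta b_j R_j(T)}$ and apply the generalized H\"older inequality with all exponents equal to $|\mathcal{U}|$, bounding the expectation of the product by $\prod_{j\in\mathcal{U}} \bigl(\mathbb{E}[e^{\theta b_j |\mathcal{U}| R_j(T)}]\bigr)^{1/|\mathcal{U}|}$. Since Property~\ref{assume-unconsuming-reaction} gives each $j \in \mathcal{U}$ an exponential moment $\mathbb{E}[e^{\epsilon_j R_j(T)}] < \infty$, choosing $\theta > 0$ small enough that $\theta b_j |\mathcal{U}| \leq \epsilon_j$ for every $j$ with $b_j > 0$ makes each factor finite, so $\mathbb{E}[e^{\theta R_1(T)}] < \infty$ and the reduction yields \eqref{eqn:main-integrability-right}.

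The main obstacle is the duality step: one must convert the purely geometric boundedness hypothesis into an explicit affine upper bound with nonnegative coefficients that holds pathwise for all $\omega$ at once. What makes this work is precisely that the dual feasible region $D$ depends only on the consuming stoichiometric vectors and not on the random vector $y$, so a single multiplier $\lambda$ serves every sample path; once the affine bound is in hand, the remaining H\"older estimate and moment bookkeeping are routine.
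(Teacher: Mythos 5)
Your proof is correct and follows essentially the same route as the paper, which establishes the theorem by noting that the actual consuming reaction counts form a feasible point of \eqref{eq-lin-program} with $y = x_0 + \sum_{j\in\mathcal{U}}\mu_j R_j(t)$ and asserting that boundedness of the feasible region in the $\xi_1$ direction yields a pathwise affine upper bound on $R_1(t)$ in terms of the unconsuming counts, to which Property~\ref{assume-unconsuming-reaction} is then applied. The paper leaves that key implication (and the final moment bookkeeping) implicit; your LP weak-duality argument with a single $y$-independent multiplier $\lambda$, together with the generalized H\"older estimate and the identification of $((c^*+\epsilon)/c^*)^{R_1}$ with $e^{\theta R_1}$, supplies exactly the details needed to make it rigorous.
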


We illustrate the application of Theorem~\ref{thm-main} by examining the following  network that models gene expression.

%
%
%

\begin{example}
Let us consider the following system of gene expression
\begin{equation*}
\begin{split}
&R_1 : A \xrightarrow{c_1} A + S_1, \qquad R_2 : A^{\prime} \xrightarrow{c_2} A^{\prime} + S_1, \qquad R_3 : S_1 \xrightarrow{c_3} S_2 \\
&R_4 : A + S_2 \xrightarrow{\alpha} A^{\prime}, \qquad R_5 : A^{\prime} \xrightarrow{\beta} A + S_2
\end{split}
\end{equation*}
with stoichiometric vectors
\begin{equation*}
\begin{split}
&\nu_1 = (0, 0, 1, 0)^{\textrm{T}}, \quad \nu_2 = (0, 0, 1, 0)^{\textrm{T}}, \quad \nu_3 = (0,0,-1,1)^{\textrm{T}}, \\
&\nu_4 = (-1, 1, 0, -1)^{\textrm{T}}, \quad \nu_5 =(1, -1, 0, 1)^{\textrm{T}}.
\end{split}
\end{equation*}
Denote the population of $(A, A^{\prime},  S_1, S_2)$ by
$(x_1, \ldots, x_4)$ and hence the propensities are
\begin{equation*}
\begin{split}
a_1(x, c) = c_1 x_1, ~a_2(x, c) = c_2 x_2,  ~a_3(x, c) = c_3 x_3,
~a_4(x, c) = \alpha x_1 x_4, ~a_5(x, c) = \beta x_2.
\end{split}
\end{equation*}
Note that the total population of $A$ and $A^{\prime}$ is preserved and hence $a_1, a_2$
and $a_5$ are bounded on $\mathcal{S}_{x_0}$. Therefore, condition \eqref{eqn:main-integrability-right}
with respect to $c_1, c_2$ and $\beta$ by Remark~\ref{remark:bounded-unconsuming-reaction}.
The propensities associated with
$R_3$ and $R_4$ are not bounded as the populations of $S_1$ and $S_2$
are unbounded. However, note that

\[X_1(t) = x_{0, 1} - R_4(t) + R_5(t) \geq 0\]
and therefore
\[
\mathbb{E}(e^{\epsilon R_4(t)}) \leq \mathbb{E}(e^{\epsilon(x_{0,1} + R_5(t))}) < \infty
\]
for some $\epsilon > 0$
since reaction~$5$ has bounded propensity. 
Similarly, note that
\[
X_3(t) = x_{0, 3} + R_1(t) + R_2(t) - R_3(t) \geq 0
\]
and therefore
\[
\mathbb{E}(e^{\epsilon R_3(t)}) \leq \mathbb{E}(e^{\epsilon(x_{0, 3} + R_1(t) + R_2(t))}) < \infty.
\]
Hence, condition \eqref{eqn:main-integrability-right}
holds with respect to $c_3$ and $\alpha$ as well. 
\end{example}

We note that the usefulness of Theorem~\ref{thm-main} for
reactions that have unbounded propensities relies on the
condition involving the linear programming problem \eqref{eq-lin-program}.
We illustrate an example where this condition fails. That is, it is not
possible to bound a reaction with unbounded propensity in terms of
a reaction with bounded propensity. 

\begin{example}\label{eg:lin-prog-fail}
  Consider the reaction system
  \[
  R_1: \, \emptyset \to S_1 \quad R_2:\, 2 S_1 \to 2 S_2 \quad R_3:\, 2 S_2 \to 2
  S_1,
  \]
with stochastic mass action propensities $a_1(x) = c_1$, $a_2(x)=c_2
x_1(x_1-1)/2$ and $a_3(x)=c_3 x_2 (x_2-1)/2$. The stoichiometric vectors are
\[
\nu_1=(1,0)^{\textrm{T}}, \quad \nu_2=(-2,2)^{\textrm{T}}, \quad \nu_3=(2,-2)^{\textrm{T}}.
\]
The propensity of the first
reaction is bounded and hence the exponential integrability
$\mathbb{E}(e^{\epsilon R_1(t)})<\infty$ holds for each $\epsilon>0$.
However, if we are interested in the sensitivities with respect to $c_2$ or
$c_3$, then we wish to verify $\mathbb{E}(e^{\epsilon R_j(t)})<\infty$ for
$j=2$ or $3$. Since $\mathcal{S}_{x_0}$ is unbounded, the propensities $a_2$ and $a_3$
are unbounded. Thus, it will be desirable to bound $R_2(t)$ and/or $R_3(t)$
in terms of $R_1(t)$.

The non-negativity of states, $X_1(t) \geq 0$ and $X_2(t) \geq 0$, again
yields
\[
\begin{aligned}
  X_1(t) &= x_{0, 1} + R_1(t) - 2 R_2(t) + 2 R_3(t) \geq 0,\\
  X_2(t) &= x_{0, 2} + 2 R_2(t) - 2 R_3(t) \geq 0.
\end{aligned}
\]
These inequalities permit arbitrarily large values for $R_2(t)$ and $R_3(t)$
for a given value of $R_1(t)$. Thus, it is not possible to bound $R_2(t)$
or $R_3(t)$ in terms of $R_1(t)$. This will be reflected in the failure of
the condition
on the linear programming problem (after a reordering of reaction indices)
stated in Theorem~\ref{thm-main}. 
\end{example}

Next, we focus on the situation where the system has an unconsuming reaction
with unbounded propensity.
Even if we may bound all other reactions in terms
of this unconsuming reaction, one still needs to verify the 
Property~\ref{assume-unconsuming-reaction} for this unconsuming reaction. 
We describe a coupling strategy inspired by
\cite{kurtz1982representation, CFD} and is based on the RTC~\eqref{eqn:random-time-change} that allows us to verify 
Property~\ref{assume-unconsuming-reaction} for a class of networks where
unconsuming reactions may have unbounded propensities. 
We motivate the strategy through two examples. 

\begin{example}\label{eg:coupling-1}
We consider the reaction system 
\[ R_1:\, S \xrightarrow{c_1} 2S, \qquad R_2:\,2S \xrightarrow{c_2} \emptyset\]
with stoichiometric vectors $\nu_1 = 1, \nu_2 = -2$ and initial population
$x_0$. We take $a_1(x)= c_1 x_1$ and $a_2(x)=c_2 x_2 (x_2-1)/2$. 

We note that the propensities are both unbounded. Our first aim is to justify that $\mathbb{E}(e^{\epsilon R_1(t, c)}) < \infty$ for
some $\epsilon>0$. 
Let us denote $X(t)$ the population of species $S$ in the above reaction system.
In the meanwhile, we consider another reaction system with a single reaction channel
\[ \widetilde{R}_1:\, \widetilde{S} \xrightarrow{c_1} 2\widetilde{S}\]
with the same initial population $x_0$
and denote the population of $\widetilde{S}$ by $\widetilde{X}(t)$. 
Note that by the random time change representation \eqref{eqn:random-time-change} we can couple the processes $X(t)$ and $\widetilde{X}(t)$ in the same probability space using 
the following coupling
\begin{equation}\label{eqn:coupling-1}
\begin{split}
X(t) = x_0 + &Y_1\left(\int_0^t a_1(X(s)) \wedge a_1(\widetilde{X}(s))\, ds\right)\\
                 +& Y_2\left(\int_0^t a_1(X(s)) - a_1(X(s)) \wedge a_1(\widetilde{X}(s)) \, ds\right)\\
                 - & 2 Y_3\left( \int_0^t a_2(X(s)) \, ds \right) 
\end{split}
\end{equation}
and
\begin{equation}\label{eqn:coupling-2}
\begin{split}
\widetilde{X}(t) = x_0 + &Y_1\left(\int_0^t a_1(X(s)) \wedge a_1(\widetilde{X}(s))\, ds\right)\\
                 +& Y_2\left(\int_0^t a_1(\widetilde{X}(s)) - a_1(X(s)) \wedge a_1(\widetilde{X}(s)) \, ds\right),
\end{split}
\end{equation}
where $Y_i, i = 1, 2, 3$ are independent unit rate Poisson process carried by
$(\Omega, \mathcal{F}, \mathbb{P})$.
We note that
\[
\begin{aligned}
R_1(t) = &Y_1\left(\int_0^t a_1(X(s)) \wedge a_1(\widetilde{X}(s))\, ds\right)\\
                 +& Y_2\left(\int_0^t a_1(X(s)) - a_1(X(s)) \wedge a_1(\widetilde{X}(s)) \, ds\right)\\
\end{aligned}
\]
and
\[
\begin{aligned}
\widetilde{R}_1(t) = &Y_1\left(\int_0^t a_1(X(s)) \wedge a_1(\widetilde{X}(s))\, ds\right)\\
                 +& Y_2\left(\int_0^t a_1(\widetilde{X}(s)) - a_1(X(s)) \wedge
                 a_1(\widetilde{X}(s)) \, ds\right). 
\end{aligned}
\]

We claim that for any $t > 0$, $X(t) \leq \widetilde{X}(t)$ almost surely.
Define $T_n$ the $n$-th jump time of the processes $X(t)$ and
$\widetilde{X}(t)$ combined. Set $T_0=0$. We note that $X(T_0)=\widetilde{X}(T_0)=x_0$.
Suppose $X(T_j) \leq \widetilde{X}(T_j)$ for $j=0,\dots,n-1$ for some $n$. 
Since $X(t)$ and $\widetilde{X}(t)$ are constant between two successive jumps, 
$X(t) \leq \widetilde{X}(t)$ for all $t < T_{n}$ and hence by monotonicity
of $a_1$, we have $a_1(X(t)) \leq a_1(\widetilde{X}(t))$ for all $t < T_{n}$.
Thus
\[
\int_0^{T_n} (a_1(X(s)) - a_1(\widetilde{X}(s)) \wedge a_1(X(s)) \, ds =0.
\]
Hence
\[
X(T_{n}) = x_0 + Y_1\left(\int_0^{T_{n}} a_1(X(s)) \, ds\right) + Y_1(0) -  2 Y_3\left( \int_0^{T_n} a_2(X(s)) \, ds \right) 
\]
and 
\[
\widetilde{X}(T_n) = x_0 +  Y_1\left(\int_0^{T_n} a_1(X(s)) \, ds\right) + Y_2\left(\int_0^{T_n} a_1(\widetilde{X}(s)) - a_1(X(s))  \, ds\right).
\] 
On account of the fact that a unit Poisson process starts at zero,
$Y_1(0)=0$, and it is clear that $X(T_n) \leq \widetilde{X}(T_n)$, completing
the induction. 

Now since $a_1(X(t)) \leq a_1(\widetilde{X}(t))$ for any $t>0$, we deduce
from~\eqref{eqn:coupling-1} and~\eqref{eqn:coupling-2} that 
\[
Y_1\left(\int_0^t a_1(X(s)))\, ds\right) 
=   R_1(t)
\]
and
\[
Y_1\left(\int_0^t a_1(X(s)))\, ds\right) + 
Y_2\left(\int_0^t a_1(\widetilde{X}(s)) - a_1(X(s))  \, ds\right)
=   \widetilde{R}_1(t),
\]
showing $R_1(t) \leq \widetilde{R}_1(t)$. This in turn implies that $\mathbb{E}(e^{\epsilon R_1(t)}) \leq \mathbb{E}(e^{\epsilon \widetilde{R}_1(t)})$.
Consequently, the desired exponential integrability of $R_1(t)$ follows from the fact that the reaction count $\widetilde{R}_1(t)$ of auxiliary system $S \to 2S$ is a negative binomial process and hence there exists $\epsilon > 0$ such that $\mathbb{E}(e^{\epsilon \widetilde{R}_1(t)}) < \infty$ (see Appendix~\ref{app:negative-binomial}). 
Finally, the exponential integrability \eqref{eqn:main-integrability-right} holds  with respect to $c_2$ since 
\[
X(t) = x_0 + R_1(t) - 2R_2(t) \geq 0
\]
and hence $R_2(t)$ can be bounded in terms of $R_1(t)$ whose exponential integrability 
has already been established.
\end{example}

The next example extends the coupling argument further. 

\begin{example}\label{eg:coupling-2}
Let us consider the two species Lotka-Volterra model \cite{gupta2014scalable}, namely,
\begin{equation*}
\begin{split}
R_1: \, \emptyset \xrightarrow{\alpha_1} S_1, \quad R_2: \, S_1 \xrightarrow{\beta_1} 2 S_1,
\quad R_3:\,S_1 + S_2 \xrightarrow{\gamma_{12}} S_2, \quad R_4:\, S_1 \xrightarrow{\delta_1} \emptyset, \\
R_5: \, \emptyset \xrightarrow{\alpha_2} S_2, \quad R_6: \, S_2 \xrightarrow{\beta_2} 2 S_2,
\quad R_7:\,S_2 + S_1 \xrightarrow{\gamma_{21}} S_1, \quad R_8:\, S_2 \xrightarrow{\delta_2} \emptyset.
\end{split}
\end{equation*}
whose stoichiometric vectors are
\begin{equation*}
\begin{split}
\nu_1 = (1, 0)^{\textrm{T}}, \quad \nu_2 = (1, 0)^{\textrm{T}}, \quad \nu_3 = (-1, 0)^{\textrm{T}}, \quad \nu_4 = (-1, 0)^{\textrm{T}}\\
\nu_5 = (0, 1)^{\textrm{T}}, \quad \nu_6 = (0, 1)^{\textrm{T}}, \quad \nu_7 = (0, -1 )^{\textrm{T}}, \quad \nu_8 = (0, -1)^{\textrm{T}}
\end{split}
\end{equation*}
Since the propensity of reaction channel $1$ is constant,
$\mathbb{E}(e^{\epsilon R_1(t)})<\infty$. 
The same argument applies to reaction channel $5$.
To show that both $\mathbb{E}(e^{\epsilon R_2(t)})$ and $\mathbb{E}(e^{\epsilon R_6(t)})$ are finite (for some $\epsilon>0$), we couple the system with the following elementary system (with rate parameters $\alpha_1 + \beta_1$ and $\alpha_2 + \beta_2$)
\[
\widetilde{R}_1: \, \widetilde{S}_1 \xrightarrow{\alpha_1 + \beta_1} 2\widetilde{S}_1,
\quad 
\widetilde{R}_2: \, \widetilde{S}_2 \xrightarrow{\alpha_2 + \beta_2} 2\widetilde{S}_2
\]
through
\begin{equation*}
\begin{split}
X_1(t) = x_{0,1}  &+ Y_{1,1}\left(\int_0^t \alpha_1 \, ds\right) +
Y_{1,2}\left(\int_0^t \beta_1 (X_1(s) \wedge \widetilde{X}_1(s)) \,ds\right)\\
&+ Y_{1, 3}\left(\int_0^t \beta_1 (X_1(s) - X_1(s) \wedge \widetilde{X}_1(s)) \, ds \right)\\
&- Y_{1, 4}\left(\int_0^t \gamma_{12} X_1(s) X_2(s) \, ds\right) - Y_{1, 5}\left(\int_0^t \delta_1 X_1(s)\, ds\right),\\
X_2(t) = x_{0,2}  &+ Y_{2,1}\left(\int_0^t \alpha_2 \, ds\right) +
Y_{2,2}\left(\int_0^t \beta_2 (X_2(s) \wedge \widetilde{X}_2(s)) \,ds\right)\\
&+ Y_{2, 3}\left(\int_0^t \beta_2 (X_2(s) - X_2(s) \wedge \widetilde{X}_2(s)) \, ds \right)\\
&- Y_{2, 4}\left(\int_0^t \gamma_{21} X_2(s) X_1(s) \, ds\right) - Y_{2, 5}\left(\int_0^t \delta_2 X_2(s)\, ds\right)
\end{split}
\end{equation*}
and
\begin{equation*}
\begin{split}
\widetilde{X}_1(t) = \widetilde{x}_{0,1} &+ Y_{1, 1}\left(\int_0^t \alpha_1 \widetilde{X}_1(s)\, ds\right) + Y_{1, 2}\left(\int_0^t \beta_1 (X_1(s) \wedge
\widetilde{X}_1(s))\, ds\right)\\
&+ Y_{1, 6}\left(\int_0^t \beta_1 (\widetilde{X}_1(s) - X_1(s) \wedge \widetilde{X}_1(s)) \, ds\right), \\
\widetilde{X}_2(t) = \widetilde{x}_{0,2} &+ Y_{2, 1}\left(\int_0^t \alpha_2 \widetilde{X}_2(s)\, ds\right) + Y_{2, 2}\left(\int_0^t \beta_2 (X_2(s) \wedge
\widetilde{X}_2(s))\, ds\right)\\
&+ Y_{2, 6}\left(\int_0^t \beta_2 (\widetilde{X}_2(s) - X_2(s) \wedge \widetilde{X}_2(s)) \, ds\right) 
\end{split}
\end{equation*}
where $Y_{1, i}$ and $Y_{2, i}$ for $i = 1, \ldots, 6$ are independent unit rate Poisson processes
carried by $(\Omega, \mathcal{F}, \mathbb{P})$ and the initial population
$\widetilde{x}_{0,1} = x_{0,1}+ 1$ and $\widetilde{x}_{0,2} = x_{0,2}+ 1$. 
We show that $\mathbb{E}(e^{\epsilon R_2(t)})$ is finite for sufficiently small $\epsilon > 0$.
To this end, note that the reaction
event counts $R_1(t),R_2(t)$ and $\widetilde{R}_1(t)$ are given by
\begin{equation*}
\begin{split}
  R_1(t) =& Y_{1,1}\left(\int_0^t \alpha_1 \, ds\right),\\
  R_2(t) =& Y_{1,2}\left(\int_0^t \beta_1 (X_1(s) \wedge \widetilde{X}_1(s)) \,ds\right)
+ Y_{1,3}\left(\int_0^t \beta_1 (X_1(s) - X_1(s) \wedge \widetilde{X}_1(s)) \, ds
  \right),\\
  \widetilde{R}_1(t) =&  Y_{1,1}\left(\int_0^t \alpha_1 \widetilde{X}_1(s) \, ds\right) + Y_{1,2}\left(\int_0^t \beta_1 (X_1(s) \wedge
\widetilde{X}_1(s))\, ds\right)\\
&+ Y_{1,6}\left(\int_0^t \beta_1 (\widetilde{X}_1(s) -X_1(s) \wedge \widetilde{X}_1(s)) \, ds\right). 
\end{split}
\end{equation*}
We further note that $\widetilde{X}_1(t) \geq \widetilde{x}_{0,1} \geq 1$. Hence
\[
Y_{1,1}\left(\int_0^t \alpha_1 \, ds\right) \leq Y_{1,1}\left(\int_0^t \alpha_1 \widetilde{X}_1(s) \, ds\right).
\]
We show that $X_1(t) \leq \widetilde{X}_1(t)$ for all $t \geq 0$
following a similar argument as in Example~\ref{eg:coupling-1}. Define
the $n$th jump time of the combined process $(X_1(t),\widetilde{X}_1(t))$
to be $T_n$ and set $T_0=0$. We observe $X_1(T_0) \leq \widetilde{X}_1(T_0)$.
Suppose $X_1(T_j) \leq \widetilde{X}_1(T_j)$ for $j=0,\dots,n-1$ for some
$n$. Then, following an argument similar to that in
Example~\ref{eg:coupling-1}, (and noting $Y_{1, 3}(0)=0$) we obtain 
\[
\begin{aligned}
X_1(T_n) = x_{0,1}  &+ Y_{1,1}\left(\int_0^{T_n} \alpha_1 \, ds\right) +
Y_{1,2}\left(\int_0^{T_n} \beta_1 X_1(s) \,ds\right)\\
&- Y_{1,4}\left(\int_0^{T_n} \gamma_{12} X_1(s) X_2(s) \, ds\right) - Y_{1,5}\left(\int_0^{T_n} \delta_1 X_1(s)\, ds\right)\\
\end{aligned}
\]
and
\[
\begin{aligned}
\widetilde{X}_1(T_n) &= \widetilde{x}_{0,1} + Y_{1,1}\left(\int_0^{T_n} \alpha_1 \widetilde{X}_1(s)\, ds\right) + Y_{1,2}\left(\int_0^{T_n} \beta_1 X_1(s)\, ds\right)\\
&+ Y_{1,6}\left(\int_0^t \beta_1 (\widetilde{X}_1(s) - X_1(s)) \, ds\right),
\end{aligned}
\]
which shows $X_1(T_n) \leq \widetilde{X}_1(T_n)$ completing the induction.

Since $X_1(t) \leq  \widetilde{X}_1(t)$ and $\widetilde{X}_1(t) \geq 1$ for all $t \geq 0$, we obtain that
\[
\begin{aligned}
R_1(t) + R_2(t) &= Y_{1,1}\left(\int_0^t \alpha_1 \, ds\right) + Y_{1,2}\left(\int_0^t
\beta_1 X_1(s)\,ds\right) \\
&\leq Y_{1,1}\left(\int_0^{t} \alpha_1 \widetilde{X}_1(s)\,
ds\right) + Y_{1,2}\left(\int_0^{t} \beta_1 X_1(s)\, ds\right) 
 \leq \widetilde{X}_1(t)
\end{aligned}
\]
for all $t \geq 0$.
Since $\widetilde{X}_1(t)$ follows the negative binomial distribution,
there exists $\epsilon>0$ such that
\begin{equation}\label{eqn:coupling-2-integrability}
\mathbb{E}\left( e^{\epsilon (R_1(t) + R_2(t))}  \right) \leq \mathbb{E}\left( e^{\epsilon \widetilde{X}_1(t)} \right) < \infty.
\end{equation}
Applying the same argument to $R_5(t), R_6(t)$ and $\widetilde{R}_2(t)$ leads to
\begin{equation}\label{eqn:coupling-3-integrability}
\mathbb{E}\left( e^{\epsilon (R_5(t) + R_6(t))}  \right) \leq \mathbb{E}\left( e^{\epsilon \widetilde{X}_2(t)} \right) < \infty
\end{equation}
since $\widetilde{X}_2(t)$ is negative binomially distributed as well.

Finally, for sensitivities with respect to $\gamma_{12} (\gamma_{21} )$ or $\delta_1 (\delta_2)$,
note that
\begin{equation*}
\begin{split}
X_1(t) &= x_{0,1} + R_1(t) + R_2(t) - R_3(t) - R_4(t) \geq 0,\\
X_2(t) &= x_{0,2} + R_5(t) + R_6(t) - R_7(t) - R_8(t) \geq 0,
\end{split}
\end{equation*}
and hence $R_3(t) + R_4(t)$ and $R_5(t) + R_6(t)$ can be bounded by $x_{0,1} + R_1(t) + R_2(t)$ and $x_{0,2} + R_5(t) + R_6(t)$, respectively.
Their exponential integrability follows directly from~\eqref{eqn:coupling-2-integrability} and~\eqref{eqn:coupling-3-integrability}.
\end{example}

Examples~\ref{eg:coupling-1} and~\ref{eg:coupling-2} suggest a general
result for verifying Property~\ref{assume-unconsuming-reaction} for a class of reaction networks.
We summarize the result in the following theorem.
\begin{theorem}\label{thm:coupling}
Given a non-explosive chemical reaction network with product form propensity functions,
we consider a class of systems that satisfies the following conditions.
\begin{enumerate}
\item  All unconsuming reactions have the form   
$A \to A + S_i$ where $S_i$ is a species and $A$ is a
  species (or empty) with bounded population (we refer to this as {\em type 1} reaction)
  or have the form $S_i \to 2S_i$ where $S_i$ is a species ( we refer to the
  this as {\em type 2} reaction).
\item Let ${S_1,S_2,..,S_k}$ be the set of species involved in type 2
  unconsuming reactions in the previous condition. Then none of the consuming reactions result in an increase of any of the species $S_i$ for $i=1,..,k$.
\end{enumerate}
Then Property~\ref{assume-unconsuming-reaction} is satisfied.
\end{theorem}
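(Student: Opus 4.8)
The plan is to split the set $\mathcal{U}$ of unconsuming reactions into the two declared types and dispose of each separately, thereby reducing the whole statement to an exponential moment bound for the type 2 reactions obtained through a domination coupling. First I would observe that every type 1 reaction $A \to A + S_i$ has propensity $c\,X_A$ with $X_A$ bounded on $\mathcal{S}_{x_0}$ (or a constant when $A$ is empty), hence its propensity is bounded on $\mathcal{S}_{x_0}$. Thus Remark~\ref{remark:bounded-unconsuming-reaction} applies verbatim and yields $\mathbb{E}(e^{\epsilon R_j(t,c)}) < \infty$ for every $\epsilon > 0$. This removes all type 1 reactions, leaving only the task of establishing, for each type 2 reaction $S_i \to 2S_i$ with $i \in \{1,\dots,k\}$, some $\epsilon>0$ with $\mathbb{E}(e^{\epsilon R_j(t,c)})<\infty$.

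The core of the argument would fix a type 2 species $S_i$ and dominate $X_i(t)$ by an auxiliary linear birth (Yule) process $\widetilde{X}_i(t)$, generalizing Examples~\ref{eg:coupling-1} and~\ref{eg:coupling-2}. The crucial structural input is hypothesis (2): since no consuming reaction increases $S_i$, the only reactions that raise $X_i$ are unconsuming, and by hypothesis (1) these are exactly the type 2 reaction $S_i \to 2S_i$ (gross rate $c_i X_i$) together with finitely many type 1 reactions producing $S_i$ (each with propensity bounded by a constant, the catalyst $A$ being bounded). Consequently the gross birth rate of $X_i$ is at most $c_i X_i + B_i$ for a constant $B_i$ and, decisively, depends only on $X_i$ and on bounded-population species, never on another unbounded species. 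I would therefore take $\widetilde{X}_i$ to be a pure birth process with per-capita rate $\lambda_i := c_i + \sum_\ell \lambda_\ell$, the sum running over the type 1 reactions feeding $S_i$ with each $\lambda_\ell$ chosen at least as large as the corresponding constant bound, started from $\widetilde{x}_{0,i} := x_{0,i}+1 \ge 1$. Following the examples, I would realize the coupling with independent unit-rate Poisson clocks: a min-splitting of the type 2 birth in which the extra $X_i$-clock carries rate $c_i(X_i - X_i \wedge \widetilde{X}_i)$, and, for each immigration source, the same clock time-changed by $\lambda_\ell \widetilde{X}_i$ in $\widetilde{X}_i$ versus its true bounded propensity in $X_i$; the consuming reactions enter $X_i$ through their own death clocks and are simply absent from $\widetilde{X}_i$.

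Next I would prove $X_i(t) \le \widetilde{X}_i(t)$ for all $t$ almost surely by induction on the combined jump times $T_n$, exactly as in the examples: up to $T_n$ the inductive hypothesis forces the extra $X_i$-birth clock to have zero integrated rate (so it contributes nothing, since a unit Poisson process vanishes at $0$), every immigration event of $X_i$ is dominated because $\widetilde{X}_i \ge 1$ makes $\lambda_\ell \widetilde{X}_i$ exceed the bounded immigration rate, the shared type 2 births of $X_i$ are a subset of those of $\widetilde{X}_i$, and the death clocks only lower $X_i$; together these give $X_i(T_n) \le \widetilde{X}_i(T_n)$. Once the domination holds, every increase of $X_i$ is matched by an increase of $\widetilde{X}_i$, so the total count of births of $S_i$, in particular $R_j(t)$ for the type 2 reaction, is at most $\widetilde{X}_i(t) - \widetilde{x}_{0,i} \le \widetilde{X}_i(t)$. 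Since $\widetilde{X}_i(t)$ is a Yule process started from finitely many individuals it has a negative binomial law, and hence (Appendix~\ref{app:negative-binomial}) there is $\epsilon>0$ with $\mathbb{E}(e^{\epsilon \widetilde{X}_i(t)})<\infty$; therefore $\mathbb{E}(e^{\epsilon R_j(t,c)}) \le \mathbb{E}(e^{\epsilon \widetilde{X}_i(t)}) < \infty$, completing the type 2 case and thus Property~\ref{assume-unconsuming-reaction}.

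The hard part will be the coupling construction together with its domination induction, and in particular the device already visible in Example~\ref{eg:coupling-2} of folding the bounded immigration of the type 1 reactions into the linear birth of $\widetilde{X}_i$ by exploiting $\widetilde{X}_i \ge 1$ and a slightly enlarged initial value. The other delicate point is conceptual rather than computational: hypothesis (2) is precisely what guarantees that the gross birth rate of each $X_i$ closes on $X_i$ (plus bounded species) alone, so each type 2 species can be dominated by its own scalar Yule process independently of the remaining unbounded species; without it a consuming reaction could pump $S_i$ at a rate tied to some other unbounded population and no such scalar domination would be available.
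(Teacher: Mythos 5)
Your proposal is correct and follows essentially the same route as the paper, which offers no separate formal proof of Theorem~\ref{thm:coupling} but presents it explicitly as the generalization of the coupling arguments in Examples~\ref{eg:coupling-1} and~\ref{eg:coupling-2}: type 1 reactions are dispatched by Remark~\ref{remark:bounded-unconsuming-reaction}, and each type 2 species is dominated by a Yule process via the min-splitting coupling, the enlarged initial value $x_{0,i}+1$ absorbing the bounded immigration, with the negative binomial tail bound of Appendix~\ref{app:negative-binomial} closing the argument. Your observation that hypothesis (2) is exactly what makes the birth rate of each type 2 species close on itself plus bounded terms is precisely the role it plays in the paper.
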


\begin{remark}
Some remarks are in order. 
\begin{itemize}
\item 
The coupling strategy can definitely be applied to other classes of system
given that we can identify an elementary subsystem which satisfies
Property~\ref{assume-unconsuming-reaction} in order to bound the growth
of reaction counts of the full system.
\item The above result does not apply to the example such as
\[ S_1 \to 2S_1, \quad S_1 + S_2 \to 2 S_1, \quad 2S_1 \to S_1 + S_2,\]
where the consuming reaction $S_1 + S_2 \to 2S_1$ increases the population of $S_1$.
In this case, coupling the system with $\widetilde{S}_1 \to 2\widetilde{S}_1$ does not work because we can not bound the population of $S_1$ by that of $\widetilde{S}_1$.
Verifying Property~\ref{assume-unconsuming-reaction} for this type of examples will be the focus of another work. 
\end{itemize}
\end{remark}

\bibliographystyle{tfs}
\bibliography{GT}

\appendix

%
\section{Differentiating Inside an Integral}
\noindent
\begin{theorem}\label{thm:Glynn}{\bf (Asmussen \& Glynn) \cite{Glynn-book})}
Suppose $G(c, \omega)$ is a random variable for each $c$ in some interval of the real line. 
Let $c_{\text{ref}}$ be a specific value of $c$. Suppose the following hold:
\begin{enumerate}
\item For a set of $\omega$ with probability one, $G(c, \omega)$ is differentiable with respect to $c$ at $c = c_{\text{ref}}$.

\item There exists an interval $(c_l, c_u)$ containing $c_{\text{ref}}$ (independent of $\omega$) on which $G(c, \omega)$ is Lipschitz (in $c$) for a set of $\omega$ with probability one, with constant $K$ which may depend on $\omega$. That is, for any $c_1, c_2$ in the interval $(c_l, c_u)$, the following holds:
\[|G(c_1, \omega) - G(c_2, \omega)| \leq K(\omega)|c_1 - c_2|.\]

\item $\mathbb{E}(K)$ is finite.

\item $\mathbb{E}(|G(c, \omega)|)$ is finite for all $c$ in $(c_l, c_u)$.
\end{enumerate}
Then the following holds:
\[\left.\frac{d}{dc}\right|_{c=c_{\text{ref}}} \mathbb{E}(G(c)) = \mathbb{E}\left(\left.\frac{d}{dc}\right|_{c=c_{\text{ref}}} G(c)\right).\]
\end{theorem}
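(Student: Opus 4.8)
The plan is to establish the interchange by a dominated convergence argument applied to the difference quotients of $G$ at $c_{\text{ref}}$. The four hypotheses are precisely what is needed: item~(4) ensures the expectations in the statement are finite, item~(1) supplies almost sure pointwise convergence of the difference quotients to the derivative, item~(2) provides an almost sure dominating random variable, and item~(3) makes that dominating variable integrable.

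First I would invoke item~(4) to guarantee that $\mathbb{E}(G(c))$ is finite for every $c \in (c_l, c_u)$, so that for $c \neq c_{\text{ref}}$ in this interval the difference quotient of expectations equals the expectation of the difference quotient:
\[
\frac{\mathbb{E}(G(c)) - \mathbb{E}(G(c_{\text{ref}}))}{c - c_{\text{ref}}} = \mathbb{E}\!\left( \frac{G(c,\omega) - G(c_{\text{ref}},\omega)}{c - c_{\text{ref}}} \right).
\]
Writing $D_c(\omega)$ for the difference quotient
\[
D_c(\omega) = \frac{G(c,\omega) - G(c_{\text{ref}},\omega)}{c - c_{\text{ref}}},
\]
the Lipschitz estimate of item~(2) gives at once $|D_c(\omega)| \leq K(\omega)$ for all such $c$, for a set of $\omega$ of probability one, while item~(1) gives $D_c(\omega) \to \left.\frac{d}{dc}\right|_{c=c_{\text{ref}}} G(c,\omega)$ as $c \to c_{\text{ref}}$ almost surely. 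Taking the limit in the inequality $|D_c| \leq K$ also shows $\left|\left.\frac{d}{dc}\right|_{c=c_{\text{ref}}} G(c)\right| \leq K$ almost surely, so the derivative is itself integrable by item~(3).

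To finish I would reduce the continuous limit $c \to c_{\text{ref}}$ to a sequential one. Fixing an arbitrary sequence $c_n \to c_{\text{ref}}$ with $c_n \in (c_l, c_u)$ and $c_n \neq c_{\text{ref}}$, we have $D_{c_n} \to \left.\frac{d}{dc}\right|_{c=c_{\text{ref}}} G(c)$ almost surely with $|D_{c_n}| \leq K$ and $\mathbb{E}(K) < \infty$, so the dominated convergence theorem yields $\mathbb{E}(D_{c_n}) \to \mathbb{E}\!\left( \left.\frac{d}{dc}\right|_{c=c_{\text{ref}}} G(c) \right)$. Since the limit is the same along every such sequence, $\lim_{c \to c_{\text{ref}}} \mathbb{E}(D_c)$ exists and equals $\mathbb{E}\!\left( \left.\frac{d}{dc}\right|_{c=c_{\text{ref}}} G(c) \right)$; combined with the displayed identity, this is precisely the assertion $\left.\frac{d}{dc}\right|_{c=c_{\text{ref}}} \mathbb{E}(G(c)) = \mathbb{E}\!\left( \left.\frac{d}{dc}\right|_{c=c_{\text{ref}}} G(c) \right)$.

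The argument is essentially routine, and the only point requiring care is the passage from the continuous limit appearing in the conclusion to the sequential form in which the dominated convergence theorem is stated; this is handled by the standard characterization that a real function has limit $L$ at a point if and only if it has limit $L$ along every sequence approaching that point. Measurability of each $D_c$ and integrability of the limit derivative follow directly from the hypotheses, so no further estimates are needed.
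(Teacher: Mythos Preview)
The paper does not actually prove this theorem; it is quoted in the appendix from Asmussen and Glynn's book \cite{Glynn-book} as an auxiliary result, with no proof given. Your argument is the standard dominated convergence proof and is correct: the Lipschitz bound from item~(2) dominates the difference quotients uniformly in $c$, item~(3) makes the dominator integrable, item~(1) supplies almost sure convergence, and the sequential characterization of limits handles the passage from $c \to c_{\text{ref}}$ to the form in which DCT applies. There is nothing to compare against in the paper itself.
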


\section{Probability mass function of the pure birth process $S \to 2 S$}\label{app:negative-binomial}
We consider reaction system consisting of single species $S$ and single
reaction channel $S \to 2 S$. The species count process $X(t)$
evolves on state space is $\posint$. Let the reaction propensity be given by
$cx$. That is given $X(t)=x$, the probability of one reaction event during
$(t,t+h]$ is exactly $cxh + o(h)$ as $h \to 0+$. We suppose that
  $X(0)=x$ with probability one. 

Define $p_n(t) = \text{Prob}(X(t)=n)$ for $n \in \posint$. The Kolmogorov's
forward equations are given by
\begin{equation}
\begin{aligned}  
  p'_n(t) &= - c n p_n(t) + c (n-1) p_{n-1}(t) \quad n \geq x + 1,\\
  p'_x(t) &= - c x p_x(t).
\end{aligned}
\end{equation}
We note that $p_n(t)=0$ for all $n < x$ and $t \geq 0$. Also the
initial conditions are $p_x(0)=1$ and $p_n(0)=0$ for $n \neq x$.

For $x \geq 1$, we claim that the solution is given by
\begin{equation} 
  p_{x+k}(t) = \frac{(x+k-1)!}{k! (x-1)!} q^x p^k \quad k \geq 0,
\end{equation}
where $q=e^{-ct}$ and $p=1-q$. We note that when $k=0$, this gives $p_x(t) =
e^{-cxt}$. Thus the probability mass function follows a {\em negative
  binomial} distribution.  

Clearly, the formula for $p_x(t)$ is correct. To verify the correctness
of $p_{x+k}(t)$ for $k \geq 1$, we
simply verify the forward equations. We first note that $q'=-cq$ and $p'=cq$.
We obtain that
\[
p'_{x+k}(t) =  - \frac{(x+k-1)!}{k! (x-1)!}\, c\, x\, q^x \, p^k + \frac{(x+k-1)!}{k!
  (x-1)!}\, c \, k \, q^{x+1} \, p^{k-1}.
\]
Hence
\[
\begin{aligned}
p'_{x+k} + c(x+k) p_{x+k} &= \frac{(x+k-1)!}{k! (x-1)!}\, ck \, q^x \, p^k +
\frac{(x+k-1)!}{k! (x-1)!}\, ck \, q^{x+1} \, p^{k-1}\\
&= ck \, \frac{(x+k-1)!}{k! (x-1)!}\, q^x \, p^{k-1} \, (q + p) = c (x+k-1)
p_{x+k-1},
\end{aligned}
\]
which verifies the forward equations.

We show that for each $t >0$, there exists $\epsilon>0$ such that
$\bE(e^{\epsilon X(t)}) < \infty$. In fact
\[
\bE(e^{\epsilon X(t)}) = \sum_{k=0}^\infty e^{\epsilon (x+k)} p_{x+k}(t).
\]
To show that the sum is finite, we use the ratio test. The ratio of the
$k+1$st term to the $k$-th term
\[
M_{k+1}/M_k = e^{\epsilon x} \frac{x+k}{k+1} (1-e^{-ct})
\]
which limits to $e^{\epsilon x} (1-e^{-ct})$ as $k \to \infty$ and the limit is
less than $1$ for sufficiently small $\epsilon>0$.

\end{document}